\newtheorem{example}{Example}[section]
\newtheorem{remark}{Remark}[section]
\newtheorem{definition}{Definition}[section]
\newtheorem{proposition}{Proposition}[section]
\newtheorem{theorem}{Theorem}[section]
\newtheorem{lemma}{Lemma}[section]
\newcommand{\R}{\mathbb{R}}
\begin{document}

\title{Necessary Optimality Conditions for Continuous-Time Optimization Problems with Equality and Inequality Constraints}

\author{
     M. R. C. MONTE%
     \thanks{moises.monte@ufu.br}
     \and
     V. A. de OLIVEIRA%
     \thanks{antunes@ibilce.unesp.br.}
}

\date{Instituto de Bioci\^encias, Letras e Ci\^encias Exatas, \\ UNESP -- Universidade Estadual Paulista, \\ S\~ao Jos\'e do Rio Preto, SP, Brasil}

\maketitle

\begin{abstract}

The paper is devoted to obtain first and second order necessary optimality conditions for continuous-time optimization problems with equality and inequality constraints. A full rank type regularity condition along with an uniform implicit function theorem are used in order to establish such necessary conditions.

{\bf Keywords}. Continuous-time programming, necessary optimality conditions, constraint qualifications.

\end{abstract}

\section{Introduction}

We are concerned with the general nonlinear continuous-time optimization problem with equality and inequality constraints in the form
\begin{equation}\label{1.1}
\begin{array}{ll} \mbox{maximize} & P(z)=\displaystyle\int_{0}^{T}\phi(z(t), t) dt \\ 
\mbox{subject to} & h(z(t),t)=0 ~~ \mbox{a.e.} ~t\in [0,T],\\
& g(z(t),t)\geq 0 ~~ \mbox{a.e.} ~ t\in [0,T], \\
& z\in L_{\infty}([0,T];\R^{n}),
\end{array}
\end{equation}
where $\phi: \R^{n}\times[0,T]\rightarrow\R$, $h:\R^{n}\times[0,T]\rightarrow\R^{p}$ and $g:\R^{n}\times [0,T]\rightarrow\R^{m}$. All vectors are column vectors, unless transposed when they will be denoted by a prime, and all integrals are in the Lebesgue sense.

Continuous-time problems arise often in the literature and were first proposed by Bellman \cite{bellman:1953,bellman:1957} in his studies of some dynamical models of production and inventory called ``bottleneck processes'', which gave rise to continuous-time linear programming. Such problems can be posed as
\[
\begin{array}{ll} \mbox{maximize} & P(z)=\displaystyle\int_{0}^{T}a'z(t) dt \\ 
\mbox{subject to} & z(t)\geq 0, ~~ 0\leq t\leq T,\\
& Bz(t)\leq c+\displaystyle\int_{0}^{t}Kz(s)~ds, ~~ 0\leq t\leq T, \\
& z \in L_{\infty}([0,T];\R^{n}),
\end{array}
\]
where $B$ and $K$ are $m \times n$ matrices, $a$ is a $n$-vector and $c$ is a $m$-vector. Considering a certain dynamic generalization of an ordinary linear programming problem, he formulated a corresponding dual problem, established a weak duality theorem, and suggested some computational procedures. Subsequently, Bellman's formulation and duality theory were substantially extended to more general forms of continuous-time linear programming problems, and also to certain classes of continuous-time nonlinear programming problems. For a summary of the results pertaining to duality theory in continuous-time programming and a fairly extensive list of relevant references the reader is referred to Zalmai \cite{zalmai:1985}.

Optimality conditions of the Karush-Kuhn-Tucker type were first considered in continuous-time programming by Hanson and Mond \cite{hanson:1968} for the following linearly constrained nonlinear program:
\[
\begin{array}{ll} \mbox{maximize} & P(z)=\displaystyle\int_{0}^{T}\phi(z(t)) dt \\ 
\mbox{subject to} & z(t)\geq 0, ~~ 0\leq t\leq T,\\
& B(t)z(t)\leq c(t)+\displaystyle\int_{0}^{t}K(t,s)z(s)~ds, ~~ 0\leq t\leq T, \\
& z \in L_{\infty}([0,T];\R^{n}),
\end{array}
\]
where $B(t)$ is an $m\times n$ matrix piece-wise continuous on $[0,T]$, $c(t) \in \R^{n}$ is piece-wise continuous on $[0,T]$, $K(s,t)$ is a $m\times n$ matrix piece-wise continuous on $[0,T]\times [0,T]$ and $\phi$ is a given concave scalar function twice continuously differentiable. For this purpose, certain positivity conditions on $B(t)$, $c(t)$ and $K(s, t)$ were imposed and the objective function was linearized in order to apply a extended version of Levinson's linear duality result \cite{levinson:1966}. A duality theorem for the nonlinear problem under consideration in then established, and the Karush-Kuhn-Tucker conditions are deduced as a consequence of this nonlinear duality theorem. In this way, many other authors obtained necessary and sufficient conditions for continuous-time problems with non-linear inequality constraints, for example, Farr and Hanson \cite{farr:1974,farr1:1974}, Reiland and Hanson \cite{reiland:1980}.  

Roughly speaking, optimality conditions for continuous-time nonlinear programming problems have been obtained by direct methods. In Abrham and Buie \cite{abrham:1979} a certain regularity assumption is used to establish the Kuhn-Tucker conditions for a class of convex programming problems. Reiland \cite{reiland1:1980}, employing a continuous-time version of Zangwill's constraint qualification \cite{zangwill:1969} introduced in \cite{reiland:1980}, and an infinite-dimensional form of Farkas' theorem \cite{craven:1977}, established optimality conditions and duality relations for differentiable continuous-time programs.

Brandão, Rojas-Medar and Silva, tackled nonsmooth continuous-time optimization problems, first in \cite{rojas:1998} where sufficient conditions were obtained and then in \cite{brandao:2001} which refers to necessary conditions. In \cite{de:2007}, de Oliveira and Rojas-Medar generalized concepts of the KKT-invexity and WD-invexity introduzed by Martin \cite{martin:1985} for mathematical programming problems, proving that the notion of KKT-invexity is a necessary and sufficient condition for global optimality of a Karush-Kuhn-Tucker point and that the notion of WD-invexity is a necessary and sufficient condition for weak duality. The same authors established KKT-invexity for nonsmooth continuous-time programming problems \cite{antunes:2007}. The multiobjective case was considered in \cite{de1:2007}. de Oliveira \cite{de:2010} also studied multiobjective continuous-time programming problems, but without imposing any differentiability assumption. Saddlepoint type optimality conditions, duality theorems as well as results on the scalarization method  were presented. The concept of pre-invexity was utilized.

In the formulation given in (\ref{1.1}), where equality and inequality constraints are present and the feasible solutions belong to $L_{\infty}([0,T];\R^{n})$, necessary optimality conditions are not found in the literature. We believe this is due to the fact that even a few years ago, a crucial tool for the treatment of equality constraints was not available: the uniform implicit function theorem. When fixing $t$, we can apply the classical one. But the implicit function thereby obtained may not have good properties, such as measurebility for example, with respect to $t$. Such result only appear in $1997$ in paper by Pinho and Vinter \cite{pinho:1997}. On the other hand, in $L_{\infty}([0,T];\R^{n})$ and with inequality constraints only, there is a vast literature, as cited above. In the case of formulations in other spaces, we cite Zalmai \cite{zalmai:1998}, for instance, where the feasible solutions are in a Hilbert space.

In this work, by means of the uniform implicit function theorem presented by Pinho and Vinter \cite{pinho:1997} and the use of a full rank type condition, we obtain first and second order necessary optimality conditions for continuous-time programming problems with equality and inequality constraints. The paper is organized as follows. In Section 2, we give some preliminaries. In Section 3, we consider problems with equality constraints only. Finally, in Section 4, the general case is treated. 

\section{Preliminaries}

We denote by 
$$
\Omega=\{z\in L_{\infty}([0,T];\R^{n})\mid h(z(t),t)=0,~g(z(t),t)\geq 0~\mbox{a.e.}~ t \in[0,T]\}
$$
the feasible set of problem (\ref{1.1}). By simplicity, given $\bar{z} \in \Omega$, we will write
$$
\bar{\phi}(t)=\phi(\bar{z}(t),t)~~\mbox{and}~~~ \nabla\bar{\phi}(t)=\nabla \phi(\bar{z}(t),t)~~~\mbox{a.e.~} t \in [0,T]
$$ 
as well as for $h,~\nabla h,~g,~\nabla g$ and its components. Set index sets $I=\{1,\ldots, p\}$ and $J=\{1,\ldots,m\}$ and define, a.e. $t\in [0,T]$, the index set of all binding constraints at $\bar{z}\in\Omega$ as 
$$
I_{a}(t)=\{j\in J\mid \bar{g}_{j}(t)=0\},
$$ 
and $I_{c}(t) = [0,T] \setminus I_a(t)$, its complement. For a.e. $t \in [0,T]$, let $q_{a}(t)$ and $q_{c}(t)$ the cardinals of the $I_{a}(t)$ and $I_{c}(t)$, respectively. Denote by $\delta P(z;\gamma)$ the Fréchet derivative of $P$ at $z$ with increment $\gamma\in L_{\infty}([0,T];\R^{n})$.

\begin{definition} We say that $\bar{z}\in \Omega$ is a local optimal solution of (\ref{1.1}) if there exists $\epsilon>0$ such that $P(\bar{z})\geq P(z)$ for all $z\in \Omega$ satisfying $z \in \bar{z} + \epsilon\bar{B}$, where $\bar{B}$ denotes the closed unit ball with center at the origin.
\end{definition}

Reiland (\cite{reiland1:1980}, Theorem 2) provide a condition for obtaining directions that, starting at a given point $z$, the values of objective function are increased. 

\begin{proposition}\label{Reiland}(\cite{reiland1:1980}) If 
$$\displaystyle\int_{0}^{T} \nabla\phi'(z(t),t)\gamma(t)dt>0$$
where $z,\gamma\in L_{\infty}([0,T];\R^{n})$, then there exists a number $\sigma>0$ such that $$P(z+\tau\gamma)>P(z)~\mbox{for}~0<\tau\leq\sigma.$$
\end{proposition}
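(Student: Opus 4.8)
The plan is to read the hypothesis as a statement about the one-sided directional derivative of $P$ at $z$ along $\gamma$, and then invoke the elementary fact that a scalar function with a strictly positive right derivative at a point strictly increases for small positive steps. Concretely, I would set $\psi(\tau)=P(z+\tau\gamma)$ for $\tau\geq 0$ and aim to show that $\psi$ is differentiable from the right at $\tau=0$ with
$$
\psi'(0^{+})=\int_{0}^{T}\nabla\phi'(z(t),t)\gamma(t)\,dt,
$$
which is precisely the quantity assumed positive. Since this coincides with the Fr\'echet derivative $\delta P(z;\gamma)$ already introduced, the work reduces to justifying this identification and then running a sign argument.

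First I would write the difference quotient as
$$
\frac{\psi(\tau)-\psi(0)}{\tau}=\int_{0}^{T}\frac{\phi(z(t)+\tau\gamma(t),t)-\phi(z(t),t)}{\tau}\,dt,
$$
and apply the mean value theorem pointwise in $t$: for each $t$ there is $\theta_{\tau}(t)\in(0,1)$ with
$$
\frac{\phi(z(t)+\tau\gamma(t),t)-\phi(z(t),t)}{\tau}=\nabla\phi'\bigl(z(t)+\theta_{\tau}(t)\tau\gamma(t),t\bigr)\gamma(t).
$$
As $\tau\to 0^{+}$ the integrand converges a.e.\ to $\nabla\phi'(z(t),t)\gamma(t)$ by continuity of $\nabla\phi$ in its first argument. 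Because $z,\gamma\in L_{\infty}$, for $\tau$ small the points $z(t)+\theta_{\tau}(t)\tau\gamma(t)$ stay in a fixed bounded set on which $\nabla\phi$ is bounded; combined with $\gamma\in L_{\infty}$ this yields an $L_{1}$ dominating function, so dominated convergence lets me pass the limit inside the integral and identify $\psi'(0^{+})$ with the hypothesized integral.

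Once $\psi'(0^{+})=c>0$ is established, the conclusion is immediate. By the definition of the right derivative there exists $\sigma>0$ such that
$$
\frac{\psi(\tau)-\psi(0)}{\tau}>\frac{c}{2}>0 \quad\mbox{for all } 0<\tau\leq\sigma,
$$
whence $P(z+\tau\gamma)=\psi(\tau)>\psi(0)=P(z)$ on $(0,\sigma]$. Equivalently, using the Fr\'echet derivative directly and its linearity in the increment, one has $P(z+\tau\gamma)-P(z)=\tau\,\delta P(z;\gamma)+o(\tau)$, and the same comparison of signs applies; note that the hypothesis forces $\gamma\not\equiv 0$, so $c$ is genuinely positive.

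I expect the only real obstacle to be the interchange of limit and integral in the middle step: one must either verify measurability of the intermediate point $\theta_{\tau}(t)$ or, more cleanly, work directly with the difference quotient (which is measurable in $t$) and exhibit a concrete integrable dominating function built from the $L_{\infty}$ bounds on $z$ and $\gamma$ and the local boundedness of the continuous gradient $\nabla\phi$. The final sign step is entirely elementary.
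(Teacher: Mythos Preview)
The paper does not give its own proof of this proposition; it is quoted from Reiland \cite{reiland1:1980} and used as an imported tool, so there is no in-paper argument to compare against. Your sketch is the standard and correct route: identify $\delta P(z;\gamma)$ with the integral via a difference quotient and dominated convergence, then use that a strictly positive right derivative forces $\psi(\tau)>\psi(0)$ on some interval $(0,\sigma]$.

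The one place to be careful is the dominating function. Continuity of $\nabla\phi(\cdot,t)$ for each fixed $t$ is not enough to produce an integrable majorant on $[0,T]$; you need a bound on $\|\nabla\phi(\cdot,t)\|$ over a tube around $z(\cdot)$ that is \emph{uniform in $t$}. Reiland's original hypotheses supply precisely this kind of uniform control, and in the present paper hypothesis (H1) plays the analogous role at the reference point $\bar z$. Once that uniform bound is granted, your argument goes through without change; the measurability issue you flag is handled, as you note, by working directly with the measurable difference quotient rather than with the mean-value point $\theta_\tau(t)$.
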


Let $\{F_{a}:\R^{n}\rightarrow\R^{n}\mid a\in A\}$ be a family of maps parameterized by points $a$ in a subset $A\subset\R^{n}$. If $\nabla F_{a}$ is nonsingular at some point $x_{0}$ for all $a\in A$, we know by the classic inverse mapping theorem that, for each $a$, there exists some neighborhood of $x_{0}$ on which $F_{a}$ is smoothly invertible. The following uniform inverse mapping theorem (\cite{pinho:1997}, Proposition 4.1) and, consequently, the uniform implicit function theorem (\cite{pinho:1997}, Corollary 4.2), that will have important roles in the proof of the results of Sections 3 and 4, give conditions under which the same neighborhood of $x_{0}$ can be chosen for all $a\in A$.

\begin{proposition}[Uniform Implicit Function Theorem, \cite{pinho:1997}]\label{TFIU} Consider a set $A\subset\R^{k}$, a number $\alpha > 0$, a family of functions $$\{\psi_{a}:\R^{m}\times\R^{n}\rightarrow\R^{n}\}_{a\in A},$$ and a point $(u_{0},v_{0})\in\R^{m}\times\R^{n}$ such that $\psi_{a}(u_{0},v_{0})=0$ for all $a\in A$. Assume that:
\begin{itemize}
\item[(i)] $\psi_{a}$ is continuously differentiable on $(u_{0},v_{0})+\alpha B$, uniformly in $a\in A$;
\item[(ii)] there exists a monotone increasing function $\theta:(0,\infty)\rightarrow (0,\infty)$, with $\theta (s)\downarrow 0$ as $s\downarrow 0$, such that 
\[
\|\nabla\psi_{a}(\tilde{u},\tilde{v})-\nabla\psi_{a}(u,v)\|\leq\theta(\|(\tilde{u},\tilde{v})-(u,v)\|)
\] 
for all $a\in A$, $(\tilde{u},\tilde{v}),~(u,v)\in (u_{0},v_{0})+\alpha B$;
\item[(iii)] $\nabla_{v}\psi_{a}(u_{0},v_{0})$ is nonsingular for each $a\in A$ and exists $c>0$ such that 
\[\|[\nabla_{v}\psi_{a}(u_{0},v_{0})]^{-1}\|\leq c~~\mbox{for all}~a\in A.
\]
\end{itemize} 
Then there exist $\delta \geq 0$ and a family of continuously differentiable functions \[\{\phi_{a}:u_{0}+\delta B\rightarrow v_{0}+\alpha B\}_{a\in A}\] which are Lipschitz continuous with a common Lipschitz constant $K$ such that
\begin{eqnarray}\nonumber
v_{0} &=& \phi_{a}(u_{0})~~\forall~a\in A,\\ \nonumber
\psi_{a}(u,\phi_{a}(u)) &=& 0,~~\forall~u\in u_{0}+\delta B~\forall~a\in A,~~\mbox{and}\\ \nonumber
\nabla_{u}\phi_{a}(u_{0}) &=& -[\nabla_{v}\psi_{a}(u_{0},v_{0})]^{-1}\nabla_{u}\psi_{a}(u_{0},v_{0}).
\end{eqnarray}
The numbers $\delta$ and $K$ depend only on $\theta(\cdot)$, $c$ and $\alpha$. Furthermore, if $A$ is a Borel set and $a\mapsto\psi_{a}(u,v)$ is a measurable Borel function for each $(u,v)\in (u_{0},v_{0})+\alpha B$, then $a\mapsto\phi_{a}(u)$ is a measurable Borel function for each $u\in u_{0}+\epsilon B$.
\end{proposition}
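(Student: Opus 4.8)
The plan is to reduce the statement to a uniform Banach contraction argument, since this delivers every constant uniformly in the parameter $a$ and, more importantly, makes the final measurability assertion transparent. For each fixed $a\in A$ and each $u$ close to $u_0$, I would characterize the sought value $\phi_a(u)$ as the unique fixed point near $v_0$ of the Newton-type map
\[
\Phi_{a,u}(v) = v - [\nabla_v\psi_a(u_0,v_0)]^{-1}\,\psi_a(u,v),
\]
whose fixed points are exactly the solutions of $\psi_a(u,v)=0$. Freezing the linear part at the common base point $(u_0,v_0)$ is what makes the argument uniform: by hypothesis (iii) the factor $[\nabla_v\psi_a(u_0,v_0)]^{-1}$ exists for every $a$ and is bounded in norm by the single constant $c$.

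First I would show that $\Phi_{a,u}$ is a contraction on a ball $v_0+\rho B$, uniformly in $a$ and $u$. Differentiating gives $\nabla_v\Phi_{a,u}(v) = [\nabla_v\psi_a(u_0,v_0)]^{-1}\bigl[\nabla_v\psi_a(u_0,v_0)-\nabla_v\psi_a(u,v)\bigr]$, so by (ii) and (iii) its norm is at most $c\,\theta(\|(u,v)-(u_0,v_0)\|)$. Since $\theta(s)\downarrow 0$ as $s\downarrow 0$, I can fix $\rho\le\alpha$ with $c\,\theta(2\rho)\le \frac{1}{2}$, yielding contraction constant $\frac{1}{2}$ on $v_0+\rho B$ whenever $\|u-u_0\|\le\rho$, independently of $a$. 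To force $\Phi_{a,u}$ to map $v_0+\rho B$ into itself I would estimate, using $\psi_a(u_0,v_0)=0$, that $\|\Phi_{a,u}(v_0)-v_0\|\le c\,\|\psi_a(u,v_0)-\psi_a(u_0,v_0)\|\le cM\|u-u_0\|$, where $M$ bounds $\|\nabla_u\psi_a\|$ on the ball uniformly in $a$ by (i); choosing $\delta\le\rho$ with $cM\delta\le\rho/2$ then keeps the image inside the ball for $\|u-u_0\|\le\delta$. Banach's theorem produces the unique $\phi_a(u)\in v_0+\rho B\subset v_0+\alpha B$, with $\phi_a(u_0)=v_0$ forced by uniqueness. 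Here $\rho$ is controlled by $c,\theta,\alpha$ and $\delta$ additionally by the uniform bound $M$ from (i), so the constants are common to all $a$.

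The remaining properties follow by the same estimates tracked uniformly. For the common Lipschitz constant I would write $\phi_a(u)-\phi_a(u')=\Phi_{a,u}(\phi_a(u))-\Phi_{a,u'}(\phi_a(u'))$, split off the $\frac{1}{2}$-contraction term and the term $\Phi_{a,u}(w)-\Phi_{a,u'}(w)=-[\nabla_v\psi_a(u_0,v_0)]^{-1}[\psi_a(u,w)-\psi_a(u',w)]$, which is bounded by $cM\|u-u'\|$, and then absorb the contraction term to obtain $\|\phi_a(u)-\phi_a(u')\|\le 2cM\|u-u'\|$, giving $K=2cM$ independent of $a$. Continuous differentiability and the derivative formula then come from differentiating the identity $\psi_a(u,\phi_a(u))\equiv 0$ by the chain rule and solving $\nabla_u\psi_a+\nabla_v\psi_a\,\nabla_u\phi_a=0$ for $\nabla_u\phi_a$; the required invertibility persists in a neighborhood by continuity of the derivatives together with (iii).

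Finally, the measurability claim is where the constructive nature of the argument pays off, and I expect it to be the main obstacle. Rather than invoking an abstract fixed-point statement, I would realize $\phi_a(u)$ as the uniform, geometric-rate limit of the Picard iterates $v_0,\ \Phi_{a,u}(v_0),\ \Phi_{a,u}^2(v_0),\dots$. Each iterate is assembled from $a\mapsto\psi_a(\cdot,\cdot)$ and from $a\mapsto[\nabla_v\psi_a(u_0,v_0)]^{-1}$ by composition, arithmetic, and matrix inversion (continuous on the invertible matrices), hence is a Borel function of $a$ for fixed $u$; as a pointwise limit of Borel functions, $a\mapsto\phi_a(u)$ is Borel. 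The delicate point is the Borel measurability of $a\mapsto\nabla_v\psi_a(u_0,v_0)$, which I would extract from the uniform differentiability in (i) by writing the derivative as a limit of Borel difference quotients. An alternative route, matching the paper's framing, is to apply the uniform inverse mapping theorem to $\Psi_a(u,v)=(u,\psi_a(u,v))$, whose block-triangular Jacobian carries the identity and $\nabla_v\psi_a$ on its diagonal and is therefore uniformly invertible, and to read $\phi_a$ off the second component of $\Psi_a^{-1}$; but the explicit Picard construction has the advantage of exhibiting the very sequence whose limit yields measurability directly.
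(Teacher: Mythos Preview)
The paper does not contain a proof of this proposition: it is quoted verbatim as Corollary~4.2 of de~Pinho and Vinter~\cite{pinho:1997}, with the preceding sentence making clear that it is a consequence of their uniform inverse mapping theorem (Proposition~4.1 of that reference). There is therefore no in-paper argument to compare your proposal against.

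That said, your sketch is a correct and standard route to the result, and your ``alternative route'' at the end---applying a uniform inverse mapping theorem to $\Psi_a(u,v)=(u,\psi_a(u,v))$ and reading off $\phi_a$ from the second component of the inverse---is precisely how the cited source organizes things (implicit function theorem as a corollary of the inverse mapping theorem). Your direct Picard-iteration proof is a legitimate self-contained alternative; it has the advantage you note, namely that the measurability of $a\mapsto\phi_a(u)$ follows immediately from pointwise limits of Borel iterates, whereas the inverse-function route requires one to first establish the Borel measurability of the inverse map in the parameter. One small point worth tightening: your constant $M$ bounding $\|\nabla_u\psi_a\|$ uniformly is not literally stated among the hypotheses, but it follows from (ii) together with the boundedness at the base point implicit in (i) (or, if one prefers, from (ii) alone after fixing any reference value), so the dependence of $\delta$ and $K$ on $\theta$, $c$, $\alpha$ is as claimed.
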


\section{KKT conditions for problems with equality constraints}

In this section, we will consider the continuous-time programming problem with equality constraints only. The general case is postponed to the next section. We start with the continuous-time problem without constraints, instead. The necessary optimality conditions for unrestricted problems will be used later in the proof of the main result of this section.

Consider the unrestricted continuous-time problem, namely,
\begin{equation}\label{SR}
\begin{array}{ll} \mbox{maximize} & P(z)=\displaystyle\int_{0}^{T}\phi(z(t), t) dt \\
\mbox{subject to} & z\in \Omega =L_{\infty}([0,T];\R^{n}).
\end{array}
\end{equation} 
Assume that
\begin{itemize}
\item[(H1)] $\phi(\cdot,t)$ is twice continuously differentiable throughout $[0,T]$; $\phi(z,\cdot)$ is measurable for each $z$ and there exists $K_{\phi}>0$ such that \[\|\nabla\phi(\bar{z}(t),t)\|\leq K_{\phi}~~\mbox{a.e. }~ t \in [0,T].\] 
\end{itemize}

\begin{proposition}\label{IL-SR} 
If $\bar{z}$ is a local optimal solution for $(\ref{SR})$, then 
$$
\nabla\bar{\phi}(t)=0~~\mbox{a.e.}~ t \in [0,T].
$$ 
and 
\[
\displaystyle\int_{0}^{T}\gamma'(t)\nabla^{2}\bar{\phi}(t)\gamma(t)~dt\leq 0~~\forall~\gamma\in L_{\infty}([0,T];\R^{n}).
\]
\end{proposition}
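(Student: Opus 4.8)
The plan is to establish the two conditions separately, the first-order condition by a standard variational argument adapted to the continuous-time setting, and the second-order condition by a refinement of the same idea one order deeper.

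For the first-order condition, I would argue by contradiction using Proposition \ref{Reiland}. Suppose $\nabla\bar{\phi}(t) \neq 0$ on a set of positive measure. Then I would like to build an increment $\gamma \in L_{\infty}([0,T];\R^{n})$ for which $\int_0^T \nabla\bar{\phi}'(t)\gamma(t)\,dt > 0$, since Proposition \ref{Reiland} would then produce feasible points with strictly larger objective value (feasibility being automatic here, as $\Omega$ is the whole space), contradicting local optimality. The natural choice is the pointwise ``gradient ascent'' direction $\gamma(t) = \nabla\bar{\phi}(t)$, which gives $\int_0^T \|\nabla\bar{\phi}(t)\|^2\,dt > 0$ whenever the gradient is nonzero on a positive-measure set. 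The only technical point to verify is that this $\gamma$ lies in $L_{\infty}$, which follows from the bound $\|\nabla\bar{\phi}(t)\| \leq K_\phi$ in (H1). This yields $\nabla\bar{\phi}(t) = 0$ a.e.

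For the second-order condition, I would fix an arbitrary $\gamma \in L_{\infty}([0,T];\R^{n})$ and examine the scalar function $\eta(\tau) = P(\bar{z} + \tau\gamma)$ for small $\tau$. Since $\bar{z} + \tau\gamma \in \Omega = L_{\infty}$ for every $\tau$, local optimality forces $\eta(\tau) \leq \eta(0)$ for $|\tau|$ small. A second-order Taylor expansion of $\phi(\cdot,t)$ about $\bar{z}(t)$ inside the integral, justified by the twice-continuous-differentiability in (H1), gives
\[
P(\bar{z}+\tau\gamma) = P(\bar{z}) + \tau\int_0^T \nabla\bar{\phi}'(t)\gamma(t)\,dt + \frac{\tau^2}{2}\int_0^T \gamma'(t)\nabla^2\bar{\phi}(t)\gamma(t)\,dt + o(\tau^2).
\]
The first-order term vanishes by the condition just proved. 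Dividing the inequality $P(\bar{z}+\tau\gamma) - P(\bar{z}) \leq 0$ by $\tau^2/2 > 0$ and letting $\tau \downarrow 0$ yields $\int_0^T \gamma'(t)\nabla^2\bar{\phi}(t)\gamma(t)\,dt \leq 0$, as required.

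The main obstacle is justifying that the expansion of the integrand integrates to a genuine second-order expansion of $P$ with a uniform $o(\tau^2)$ remainder; that is, interchanging the pointwise Taylor remainder with the integral and controlling it uniformly in $t$. This requires a uniform bound on $\nabla^2\phi$ near the trajectory $\bar{z}(t)$ together with the essential boundedness of $\gamma$, so that the remainder term is dominated by an integrable (indeed constant) function and the dominated convergence theorem applies. I would invoke (H1) and the $L_\infty$ membership of $\gamma$ to secure this domination, which is precisely what allows passing to the limit $\tau \downarrow 0$ under the integral sign.
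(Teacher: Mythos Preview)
Your argument is correct. The second-order part coincides with the paper's proof: expand $P(\bar z+\tau\gamma)$ to second order, use that the first-order term vanishes, divide by $\tau^{2}$ and pass to the limit. The paper phrases this as a Taylor expansion of the functional $P$ in the Banach space $L_\infty$ (citing Lusternik--Sobolev) rather than as a pointwise expansion of $\phi(\cdot,t)$ plus dominated convergence, but the content is the same; your remark about needing a uniform bound on $\nabla^2\phi$ near the trajectory is in fact a point the paper leaves implicit.

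For the first-order condition your route differs from the paper's. The paper does not argue by contradiction via Proposition~\ref{Reiland}; instead it takes an arbitrary $\gamma$, uses the first-order Taylor expansion to obtain $\delta P(\bar z;\gamma)\le 0$, repeats with $-\gamma$ to get $\delta P(\bar z;\gamma)=0$, and then reads off $\int_0^T\nabla\bar\phi'(t)\gamma(t)\,dt=0$ for all $\gamma$, hence $\nabla\bar\phi(t)=0$ a.e. Your choice $\gamma=\nabla\bar\phi$ and appeal to Proposition~\ref{Reiland} is a legitimate alternative: (H1) ensures $\gamma\in L_\infty$, the integral becomes $\int_0^T\|\nabla\bar\phi(t)\|^2\,dt>0$ if the gradient is nonzero on a set of positive measure, and feasibility of $\bar z+\tau\gamma$ is trivial since $\Omega=L_\infty$. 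The trade-off is that your argument reuses an existing tool from the paper but only shows $\delta P(\bar z;\nabla\bar\phi)\le 0$ implicitly, whereas the paper's symmetric $\pm\gamma$ argument yields the full variational equation $\delta P(\bar z;\gamma)=0$ for every $\gamma$ in one stroke, which is slightly more informative though not needed here.
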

\begin{proof}
Let $\gamma\in L_{\infty}([0,T];\R^{n})$. From the local optimality of $\bar{z}$, there exists $\bar{\tau} > 0$ such that $P(\bar{z}) \geq P(\bar{z}+\tau\gamma)$ for all $\tau \in (0,\bar{\tau})$. 

By first order Taylor expansion in Banach spaces \cite{lusternik:1961} we have that 
\[
0\geq P(\bar{z}+\tau\gamma)-P(\bar{z})=\tau\delta P(\bar{z};\gamma)+\varepsilon(\tau),
\] 
where $(\varepsilon(\tau)/\tau)\rightarrow 0$ when $\tau\rightarrow 0$. Dividing both sides by $\tau>0$ and taking limits as $\tau \downarrow 0$ we have that $\delta P(\bar{z};\gamma)\leq 0$. Similarly, $\delta P(\bar{z};-\gamma) \leq 0$. Therefore, $\delta P(\bar{z};\gamma)=0$, that is,  
\[
\displaystyle\int_{0}^{T}\nabla\phi'(\bar{z}(t),t)\gamma(t)~dt=0~\forall~\gamma\in L_{\infty}([0,T];\R^{n}).
\] 
From the last equality we see that $\nabla\phi(\bar{z}(t),t)=0$ a.e. $t \in [0,T]$.

By the second order Taylor expansion, we can write 
\begin{eqnarray}\nonumber
0\geq P(z+\tau\gamma)-P(\bar{z}) &=& \tau\delta P(\bar{z};\gamma)+\frac{1}{2}\tau^{2}\delta^{2}P(\bar{z};(\gamma,\gamma))+\varepsilon(\tau)\\ \nonumber
&=& \frac{1}{2}\tau^{2}\delta^{2}P(\bar{z};(\gamma,\gamma))+\varepsilon(\tau),
\end{eqnarray}
where $(\varepsilon(\tau)/\tau^2)\rightarrow 0$ when $\tau\rightarrow 0$. Dividing both sides by $\tau^2>0$ and taking limits as $\tau \to 0$, we obtain
\[
\frac{1}{2}\delta^{2}P(\bar{z};(\gamma,\gamma)) \leq 0 \Leftrightarrow \displaystyle\int_{0}^{T}\gamma'(t)\nabla^{2}\phi(\bar{z}(t),t)\gamma(t)dt\leq 0.
\] 
The proof is complete.
\end{proof}

Now, consider the continuous-time problem with equality constraints:
\begin{equation}\label{3.2}
\begin{array}{ll} \mbox{maximize} & P(z)=\displaystyle\int_{0}^{T}\phi(z(t), t) dt \\ 
\mbox{subject to} & h(z(t),t) = 0 ~\mbox{a.e.}~ t \in [0,T].
\end{array}
\end{equation}
In this case, 
\[
\Omega=\{z\in L_{\infty}([0,T];\R^{n})\mid h(z(t),t)=0~\mbox{a.e.}~ t \in [0,T]\}.
\] 
Let us remember that $\phi:\R^{n}\times [0,T]\rightarrow\R$ and $h:\R^{n}\times[0,T]\rightarrow\R^{p},~p\leq n$. Given $\epsilon>0$ and $\bar{z}\in\Omega$, we assume that, in addition to (H1), the following hypothesis are valid:
\begin{itemize}
\item[(H2)] $h(z,\cdot)$ is measurable for each $z$ and $h(\cdot,t)$ is twice continuously differentiable on $\bar{z}(t)+\epsilon \bar{B}$ for a.e. $t \in [0,T]$.
\item[(H3)] There exists an increasing function $\tilde{\theta}:(0,\infty)\rightarrow (0,\infty)$, $\theta(s)\downarrow 0$ as $s\downarrow 0$, such that for all $\tilde{z},z\in \bar{z}(t)+\epsilon \bar{B}$ and $\mbox{a.e.}~t \in [0,T]$, 
\[
\|\nabla h(\tilde{z},t)-\nabla h(z,t)\| \leq \tilde{\theta}(\|\tilde{z}-z\|).
\] 
There exists $K_{0}>0$ such that for a.e. $t \in [0,T]$, 
$$
\|\nabla h(\bar{z}(t),t)\| \leq K_{0}.
$$
\item[(H4)] There exists $K>0$ such that for a.e. $t \in [0,T]$, 
$$
\det\{\nabla \bar{h}(t) \nabla \bar{h}^{\prime}(t)\} \geq K,
$$ 
where $\nabla \bar{h}(t) = \nabla h(\bar{z}(t),t)$.  
\end{itemize}

\begin{remark} 
Hyphotesis (H4) guarantees that the rows of $\nabla \bar{h}(t)$, formed by the gradient vectors $\nabla \bar{h}_{i}(t)$, $i\in I$, are linearly independent for almost every $t\in [0,T]$. Moreover, along with (H3), it guarantees also that the norm of $[\nabla \bar{h}(t) \nabla \bar{h}^{\prime}(t)]^{-1}$ is uniformly bounded, a required property in the application of the uniform implicit theorem. See proposition below. 
\end{remark}

\begin{proposition}\label{Inv Ltda} 
Consider a subset $A\subset\R^{k}$ and $\{M_{a}\}_{a\in A}$ a family of $p\times p$ matrices such that 
$$
det(M_{a}) \geq K,~a\in A, \quad \mbox{and} \quad \|M_{a}\|\leq L,~a\in A,
$$ 
for some $K,L>0$. Then there exists $C>0$ such that
\[
\|[M_{a}]^{-1}\| \leq C,~a\in A.
\]
\end{proposition}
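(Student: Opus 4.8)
The plan is to rely on the classical cofactor (adjugate) representation of the inverse, $[M_{a}]^{-1} = \frac{1}{\det M_{a}}\,\mathrm{adj}(M_{a})$, which is legitimate precisely because the hypothesis $\det M_{a} \geq K > 0$ guarantees that every $M_{a}$ is invertible. The argument then reduces to bounding the two factors separately and uniformly in $a \in A$: the scalar $1/\det M_{a}$ via the lower bound on the determinant, and the matrix norm $\|\mathrm{adj}(M_{a})\|$ via the entrywise bound on $M_{a}$ supplied by $\|M_{a}\| \leq L$.

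First I would dispatch the determinant factor: since $\det M_{a} \geq K$ for every $a \in A$, we immediately get $0 < 1/\det M_{a} \leq 1/K$. Next I would control the adjugate. Each entry of $\mathrm{adj}(M_{a})$ is, up to sign, a $(p-1)\times(p-1)$ minor of $M_{a}$. Because $|(M_{a})_{ij}| = |e_{i}' M_{a} e_{j}| \leq \|M_{a}\| \leq L$ for every entry, the Leibniz expansion of such a minor (a sum of $(p-1)!$ products of $p-1$ entries, each bounded by $L$) yields $|[\mathrm{adj}(M_{a})]_{ij}| \leq (p-1)!\,L^{p-1}$. Passing from this entrywise estimate to the matrix norm — for instance through $\|B\| \leq p\max_{ij}|B_{ij}|$ — gives $\|\mathrm{adj}(M_{a})\| \leq p!\,L^{p-1}$, a bound depending only on $p$ and $L$ and therefore independent of $a$.

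Combining the two estimates produces $\|[M_{a}]^{-1}\| \leq \frac{1}{K}\|\mathrm{adj}(M_{a})\| \leq \frac{p!\,L^{p-1}}{K} =: C$ for every $a \in A$, which is exactly the asserted uniform bound. I expect no genuine obstacle here; the result is essentially an assembly of standard linear-algebra estimates. The only points deserving a little care are the passage between the operator norm and entrywise bounds (which contributes merely dimensional constants) and the bookkeeping verifying that every constant that appears depends solely on $K$, $L$, and $p$ — so that the final constant $C$ is truly uniform over the parameter set $A$, as required for the subsequent application of the uniform implicit function theorem under hypotheses (H3)--(H4).
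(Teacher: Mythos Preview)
Your argument is correct, but it proceeds along a different route from the paper's proof. The paper uses the singular value decomposition $M_{a}=U_{a}\Sigma_{a}V_{a}^{-1}$: from $\|M_{a}\|\leq L$ one obtains $\sigma_{i}^{a}\leq L$ for every singular value, and from $\det M_{a}=\prod_{i}\sigma_{i}^{a}\geq K$ one extracts the lower bound $\sigma_{p}^{a}\geq K/L^{p-1}$ on the smallest singular value, whence $\|M_{a}^{-1}\|=1/\sigma_{p}^{a}\leq L^{p-1}/K$. Your adjugate approach is more elementary---it requires only the cofactor formula and the Leibniz expansion, not the SVD---and makes the dependence of the constant on the data completely transparent; on the other hand, it produces the weaker constant $C=p!\,L^{p-1}/K$ rather than the paper's sharper $C=L^{p-1}/K$. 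For the purposes of this paper the value of $C$ is immaterial (only its existence and uniformity in $a$ matter for the application of the uniform implicit function theorem under (H3)--(H4)), so either proof suffices.
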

\begin{proof}
Consider the singular values decomposition 
\[
M_{a}=U_{a}\Sigma_{a}V^{-1}_{a},~a\in A,
\] 
where $U_{a}$ and $V_{a}$ are $p \times p$ unit matrices for all $a\in A$ and $\Sigma_{a}=\mbox{diag}\{\sigma^{a}_{i}\}_{i=1}^{p}$ are diagonal matrices with singular values ordered, without loss of generality, in decreasing order 
\[
\sigma_{1}^{a}\geq\sigma_{2}^{a}\geq \ldots\sigma_{p}^{a}>0,~a\in A.
\] 
Thus,  
\[
L \geq \|M_{a}\|=  \|U_{a}\Sigma_{a}V^{-1}_{a}\| = \|\Sigma_{a}\|,~a\in A,
\] 
so that 
\[
\sigma_{i}^{a} \leq \displaystyle\max_{i\in I}\sigma_{i}^{a} = \|\Sigma_{a}\| \leq L,~a\in A,~i\in I,
\] 
which, in turn, imply that 
\[
\prod_{i=1}^{p-1}\sigma_{i}^{a}\leq L^{p-1},~a\in A.
\]
On the other hand, 
\[
\det(M_{a})=\prod_{i=1}^{p}\sigma_{i}^{a}\geq K,~a\in A \Leftrightarrow \sigma_{p}^{a} \geq K\left[\prod_{i=1}^{p-1}\sigma_{i}^{a}\right]^{-1} \geq \frac{K}{L^{p-1}},~a\in A.
\]
Therefore, 
\begin{eqnarray*}
\| [M_{a}]^{-1} \| & = & \| [V_{a}\Sigma^{-1}_{a}U_{a}^{-1}] \| = \| \Sigma^{-1}_{a} \| \\
& = & \max_{i\in I} \left\{ \frac{1}{\sigma_{i}^{a}} \right\} = \frac{1}{\sigma_{p}^{a}} \leq \frac{L^{p-1}}{K},~a\in A,
\end{eqnarray*}
which concludes the proof with $C=L^{p-1}/K$. 
\end{proof}

We are now in position to state and prove the main result of the section. In the sequel, Karush-Kuhn-Tucker type necessary optimality conditions of first and second order are provided for problem (\ref{3.2}) under the full rank condition (H4).

\begin{theorem}\label{TKKT-EC} 
Let $\bar{z}$ be a local optimal solution for (\ref{3.2}) and suppose that (H1)-(H4) do hold. Then, there exists $u\in L_{\infty}([0,T];\R^{p})$ such that 
\begin{eqnarray}
\label{3.3} \nabla\bar{\phi}(t)+\displaystyle\sum_{i=1}^{p}u_{i}(t)\nabla \bar{h}_{i}(t)=0~\mbox{a.e.}~ t \in [0,T],
\end{eqnarray}
and
\begin{eqnarray} \label{3.4} 
\int_{0}^{T}\gamma'(t)\lbrace \nabla^{2}\bar{\phi}(t)+\sum_{i=1}^{p}u_{i}(t)\nabla^{2}\bar{h}_{i}(t)\rbrace\gamma(t)~dt\leq 0~~\forall~\gamma\in N,
\end{eqnarray}
where $N$ is given by
\[N=\{\gamma\in L_{\infty}([0,T];\R^{n})\mid\nabla \bar{h}(t)\gamma(t)=0~~\mbox{a.e.}~t \in [0,T]\}.\] 
\end{theorem}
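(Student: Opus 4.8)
The plan is to use the uniform implicit function theorem (Proposition \ref{TFIU}) to build, for every admissible direction $\gamma\in N$, a genuinely feasible curve $\tau\mapsto z^{\tau}$ through $\bar z$ with prescribed first-order behaviour, and then to transfer the unconstrained reasoning of Proposition \ref{IL-SR} to this curve. The first step is to produce, measurably in $t$, a variable splitting on which Proposition \ref{TFIU} applies uniformly. By the Cauchy--Binet formula, $\det\{\nabla\bar h(t)\nabla\bar h'(t)\}=\sum_S[\det\nabla\bar h_S(t)]^{2}$, where $S$ ranges over the $p$-element subsets of columns and $\nabla\bar h_S(t)$ is the corresponding $p\times p$ submatrix; by (H4) this sum is $\ge K$, so for each $t$ some submatrix satisfies $[\det\nabla\bar h_S(t)]^{2}\ge K/{n\choose p}$. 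Splitting $[0,T]$ into finitely many measurable pieces $E_S$ according to this choice, on each $E_S$ the block indexed by $S$ has determinant bounded away from zero and, by (H3), norm at most $K_0$, so Proposition \ref{Inv Ltda} bounds its inverse uniformly. (The same proposition, applied to $M_t=\nabla\bar h(t)\nabla\bar h'(t)$ with $\det M_t\ge K$ and $\|M_t\|\le K_0^{2}$, will later bound the multiplier.)

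Fixing a piece $E=E_S$ and ordering the coordinates so that the solvable block is last, I write $z=(x,y)$ with $y\in\R^{p}$. After the shift $\tilde\psi_{t}(x,y)=h((\bar x(t)+x,\bar y(t)+y),t)$, which has the common root $(0,0)$, hypotheses (H2), (H3) and (H4) become exactly conditions (i), (ii), (iii) of Proposition \ref{TFIU} with parameter $a=t\in E$. This yields $\delta>0$ and a measurable family of $C^{1}$ maps $\varphi_{t}$ with a common Lipschitz constant, satisfying $\varphi_t(0)=0$, $h((\bar x(t)+x,\bar y(t)+\varphi_t(x)),t)=0$ for $\|x\|\le\delta$, and $\nabla\varphi_t(0)=-[\nabla_y\bar h(t)]^{-1}\nabla_x\bar h(t)$; since $h(\cdot,t)$ is $C^{2}$, the classical implicit function theorem gives in addition that each $\varphi_t$ is $C^{2}$, with second derivatives bounded uniformly by (H3) and Proposition \ref{Inv Ltda}. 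Given $\gamma\in N$, let $\eta$ be its free block on each piece; because $\nabla\bar h(t)\gamma(t)=0$, the solved block of $\gamma$ equals $\nabla\varphi_t(0)\eta$, so the curve $z^{\tau}(t)$ defined piecewise by free block $\bar x(t)+\tau\eta(t)$ and solved block $\bar y(t)+\varphi_t(\tau\eta(t))$ is feasible, lies in $\bar z+\epsilon\bar B$ for small $|\tau|$, and satisfies $\frac{d}{d\tau}z^{\tau}(t)\big|_{0}=\gamma(t)$.

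For the first-order conclusion I would use $P(z^{\tau})\le P(\bar z)$ for $\tau$ of both signs, divide by $\tau$, and pass to the limit under the integral sign by dominated convergence (the difference quotients being dominated through $\|\nabla\phi\|$ bounded on the neighbourhood and the common Lipschitz constant of the $\varphi_t$); this gives $\int_0^T\nabla\bar\phi'(t)\gamma(t)\,dt=0$ for every $\gamma\in N$. I then set $u(t)=-[\nabla\bar h(t)\nabla\bar h'(t)]^{-1}\nabla\bar h(t)\nabla\bar\phi(t)$, which is measurable and, by Proposition \ref{Inv Ltda} together with (H1) and (H3), satisfies a bound $\|u(t)\|\le C K_0 K_\phi$, so that $u\in L_\infty([0,T];\R^{p})$. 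Writing $w=\nabla\bar\phi+\nabla\bar h'u$ one checks $\nabla\bar h\,w=0$, hence $w\in N$; taking $\gamma=w$ in the identity just obtained yields $\int_0^T\|w(t)\|^{2}\,dt=\int_0^T\nabla\bar\phi'w\,dt+\int_0^T(\nabla\bar h\,w)'u\,dt=0$, whence $w=0$ a.e., which is (\ref{3.3}).

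Finally, for (\ref{3.4}) I would carry out the second-order Taylor expansion of $\tau\mapsto P(z^{\tau})$ at $\tau=0$. Since the first-order term vanishes, $P(z^{\tau})\le P(\bar z)$ forces the second derivative to be nonpositive: $\int_0^T[\gamma'\nabla^2\bar\phi\,\gamma+\nabla\bar\phi'\zeta'']\,dt\le 0$, where $\zeta''(t)=\frac{d^2}{d\tau^2}z^{\tau}(t)\big|_0$ exists because $\varphi_t\in C^{2}$ and the interchange of limit and integral is justified by the uniform second-order bounds. Differentiating the identity $h(z^{\tau}(t),t)=0$ twice in $\tau$ gives $\nabla\bar h_k(t)'\zeta''(t)=-\gamma'(t)\nabla^2\bar h_k(t)\gamma(t)$ for each $k$, and substituting $\nabla\bar\phi'=-u'\nabla\bar h$ from (\ref{3.3}) turns $\nabla\bar\phi'\zeta''$ into $\gamma'(\sum_k u_k\nabla^2\bar h_k)\gamma$, producing exactly (\ref{3.4}). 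The main obstacle throughout is the uniformity: making the variable splitting measurable in $t$ while keeping the chosen submatrix uniformly nonsingular, and then securing the uniform first- and second-order bounds on the implicit maps $\varphi_t$ needed to differentiate under the integral sign --- precisely the role played by (H3), (H4), Proposition \ref{Inv Ltda} and the uniform implicit function theorem.
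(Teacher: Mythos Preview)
Your argument is essentially correct, but it follows a noticeably different path from the paper's. Both proofs define the multiplier by the same formula
\[
u(t)=-[\nabla\bar h(t)\nabla\bar h'(t)]^{-1}\nabla\bar h(t)\nabla\bar\phi(t),
\]
and both rely on Proposition~\ref{Inv Ltda} together with Proposition~\ref{TFIU} to obtain the required uniform bounds. The difference is in how the implicit function is set up and how the optimality conditions are extracted. You select, via Cauchy--Binet, a measurable partition of $[0,T]$ into finitely many pieces on which a fixed $p\times p$ coordinate block of $\nabla\bar h(t)$ is uniformly nonsingular, apply Proposition~\ref{TFIU} on each piece to solve for the dependent coordinates, and then build a feasible curve $\tau\mapsto z^{\tau}$ tangent to any $\gamma\in N$; the first- and second-order conditions come from a direct Taylor expansion of $\tau\mapsto P(z^{\tau})$, with the pointwise equation $(\ref{3.3})$ recovered by the orthogonality trick of testing against $w=\nabla\bar\phi+\nabla\bar h'u\in N$. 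The paper instead avoids any coordinate selection by applying Proposition~\ref{TFIU} to the single map $\mu(\xi,\eta,t)=h(\bar z(t)+\xi+\nabla\bar h'(t)\eta,t)$, whose $\eta$-derivative at the origin is $\nabla\bar h(t)\nabla\bar h'(t)$; this produces one global corrector $d(\xi,t)$ and an auxiliary \emph{unconstrained} problem in $L_\infty([0,T];\R^{n})$ with objective $\varphi(z(t),t)=\phi(z(t)+\nabla\bar h'(t)\,d(z(t)-\bar z(t),t),t)$, to which Proposition~\ref{IL-SR} is applied verbatim, yielding $(\ref{3.3})$ pointwise immediately and $(\ref{3.4})$ after computing $\nabla^{2}\varphi$ and combining with the second differentiation of the implicit identity $\mu=0$. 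Your route is closer to the classical finite-dimensional argument and perhaps more transparent for the second-order step (the identity $\nabla\bar h_k'\zeta''=-\gamma'\nabla^{2}\bar h_k\gamma$ is very clean); the paper's route is slicker in that no $t$-dependent choice of coordinates is needed and the reduction to Proposition~\ref{IL-SR} is exact. One small caveat: your claim that the second derivatives of the implicit maps are ``bounded uniformly by (H3) and Proposition~\ref{Inv Ltda}'' is not quite supported by the stated hypotheses, since (H3) controls only the modulus of continuity of $\nabla h$; the paper's proof is equally informal at this point, but you should be aware that a rigorous dominated-convergence justification of the second-order limit would require a uniform bound on $\nabla^{2}h$ that neither (H2) nor (H3) explicitly provides.
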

\begin{proof} 
Let $\bar{z}$ be a local optimal solution of (\ref{3.2}) on $\bar{z}+\epsilon \bar{B}$. The proof is divided in several steps.

\noindent\textbf{STEP 1:} We define an application that satisfy the conditions of Proposition \ref{TFIU}. Let $S_{0} \subset [0,T]$ be the largest subset where each of the conditions in (H1)-(H4) do not hold for every $t \in S_{0}$. We know from the assumptions that $S_{0}$ has Lebesgue measure equal to zero. It follows from (\cite{rudin:1976},~p. 309) that there exists a Borel set $S$, which is the intersection of a countable collection of open sets, such that $S_{0} \subset S$ and $S \setminus S_{0}$ has Lebesgue measure equal to zero. Thence $S$ is a Borel set which has Lebesgue measure equal to zero, so that $[0,T] \setminus S$ has full measure. In Proposition \ref{TFIU}, identify the Borel set $[0,T] \setminus S$ with $A$, $t$ with $a$, $(\xi,\eta)$ with $(u, v)$ and $(0,0)$ with $(u_{0},v_{0})$.

Define $\mu :\R^{n} \times \R^{p} \times [0,T] \rightarrow \R^{p}$ as 
$$
\mu (\xi,\eta,t)=h(\bar{z}(t)+\xi+\nabla \bar{h}^\prime(t)\eta,t).
$$ 
Let us check that the assumptions of Proposition \ref{TFIU} are fulfilled. First note that setting $\alpha=\min\{\frac{\epsilon}{2},\frac{\epsilon}{2K_{0}}\}$, we have that
\[
\| \bar{z}(t)+\xi+\nabla \bar{h}^\prime(t)\eta-\bar{z}(t) \| = \| \xi+\nabla \bar{h}^\prime(t)\eta \| \leq \| \xi \| + \| \nabla \bar{h}^\prime(t) \| \cdot \| \eta || \leq \epsilon,
\] 
whenever $(\xi,\eta)\in (0,0)+\alpha\bar{B}$. We have also that 
$$
\mu(0,0,t)=h(\bar{z},t)=0 ~~ \mbox{a.e.} ~ t \in [0,T]. 
$$
Let $(\tilde{\xi},\tilde{\eta}),~(\xi,\eta)\in (0,0)+\alpha B$, $t\in A$. Then,
\begin{eqnarray*}
&& \| \nabla \mu(\tilde{\xi},\tilde{\eta},t) - \nabla \mu(\xi,\eta,t) \| \\ 
&& \quad = \| [ \nabla h(\bar{z}(t)+\tilde{\xi}+\nabla \bar{h}^\prime(t)\tilde{\eta},t) ~~ \nabla h(\bar{z}(t)+\tilde{\xi}+\nabla \bar{h}^\prime(t)\tilde{\eta},t) \nabla \bar{h}^\prime(t)] \\
&& \quad\qquad-  [ \nabla h(\bar{z}(t)+\xi+\nabla \bar{h}^\prime(t)\eta,t) ~~ \nabla h(\bar{z}(t)+\xi+\nabla \bar{h}^\prime(t)\eta,t)\nabla \bar{h}^\prime(t)] \| \\
&& \quad = \| (\nabla h(\bar{z}(t)+\tilde{\xi}+\nabla \bar{h}^\prime(t)\tilde{\eta},t) - \nabla h(\bar{z}(t)+\xi+\nabla \bar{h}^\prime(t)\eta,t))[I_{n} ~~ \nabla \bar{h}^\prime(t)] \| \\
&& \quad \leq \| \nabla h(\bar{z}(t)+\tilde{\xi}+\nabla \bar{h}^\prime(t)\tilde{\eta},t) - \nabla h(\bar{z}(t)+\xi+\nabla \bar{h}^\prime(t)\eta,t) \| \cdot \| [I_{n} ~~ \nabla \bar{h}^\prime(t) \| \\
&& \quad \leq \tilde{\theta}(\|(\tilde{\xi}-\xi)+\nabla \bar{h}^\prime(t)(\tilde{\eta}-\eta)\|) \cdot (1+K_{0}) \\ 
&& \quad \leq \tilde{\theta}(\|(\tilde{\xi}-\xi)\| + K_{0}\|(\tilde{\eta}-\eta)\|) \cdot (1+K_{0}) \\
&& \quad \leq \tilde{\theta}(\|(\tilde{\xi}-\xi,\tilde{\eta}-\eta)\| + K_{0} \cdot \|(\tilde{\xi}-\xi,\tilde{\eta}-\eta)\|) \cdot (1+K_{0}) \\
&& \quad = \theta(\|(\tilde{\xi},\tilde{\eta})-(\xi,\eta)\|),
\end{eqnarray*}
where $\theta:(0,\infty)\rightarrow (0,\infty)$, $\theta(s)=(1+K_{0})\tilde{\theta}(s+K_{0}s)$, is an increasing monotone function such that $\theta(s)\downarrow 0$ when $s\downarrow 0$. We have that 
$$
\nabla_{\eta}\mu(0,0,t) = \nabla \bar{h}(t) \nabla \bar{h}^\prime(t)~~\mbox{a.e.}~ t \in [0,T].
$$ 
Thus, by assumption (H4), $\nabla_{\eta}\mu(0,0,t)$ is nonsingular for each $t\in A$. By making use of (H3), it follows from Proposition \ref{Inv Ltda} that there exists $M>0$ such that 
\begin{eqnarray} \label{3.5} 
\| [\nabla \bar{h}(t) \nabla \bar{h}^\prime(t)]^{-1} \| \leq M~\mbox{a.e.}~ t \in [0,T].
\end{eqnarray}
By Proposition \ref{TFIU} there exist $\sigma \in (0,\epsilon)$, $\delta \in (0,\epsilon)$ and an implicit function $d : \sigma B \times A \rightarrow\delta B$ such that $d(\xi,\cdot)$ is measurable for fixed $\xi$, the functions of family $\{d(\cdot,t) \mid t \in A\}$ are Lipschitz continuous with a common Lipschitz constant, $d(\cdot,t)$ is continuously differentiable for each $t \in A$, and for a.e. $t \in [0,T]$,
\begin{eqnarray}
\label{3.6} d(0,t) & = & 0,\\
\label{3.7} \mu(\xi,d(\xi,t),t) & = & 0,~ \xi \in\sigma B, \\
\label{3.8} \nabla d(0,t) & = & -[\nabla \bar{h}(t) \nabla \bar{h}^\prime(t)]^{-1}\nabla \bar{h}(t).
\end{eqnarray}
Choose $\sigma_{1}>0$ and $\delta_{1}>0$ such that
\begin{eqnarray}
\label{3.9} \sigma_{1}\in (0,\min\{\sigma,\frac{\epsilon}{2}\}),\hspace{1cm}\delta_{1}\in (0,\min\{\delta,\frac{\epsilon}{2}\}),\hspace{1cm}\sigma_{1}+K_{0}\delta_{1}\in (0,\frac{\epsilon}{2}),
\end{eqnarray}
where $K_{0}$ is given by (H3). In the following steps and without loss of generality, we consider the implicit function $d$ defined on $\sigma_{1} B\times [0,T]$ and taking values in $\delta_{1}B$.

\noindent\textbf{STEP 2:} We show that if $\bar{z}$ is a local optimal solution of (\ref{3.2}), then it is a local optimal solution of the following auxiliary problem
\begin{equation}\label{3.10}
\begin{array}{ll} \mbox{maximize} & \tilde{P}(z)=\displaystyle\int_{0}^{T}\varphi(z(t),t) dt \\ 
\mbox{subject to} & z \in L_{\infty}([0,T];\R^{n}),
\end{array}
\end{equation}
where $\varphi(z(t),t) = \phi(z(t)+\nabla \bar{h}^\prime(t) d(z(t)-\bar{z}(t),t),t)$. Indeed, suppose that $\tilde{z} \in \bar{z}+\sigma_{2}B$, for arbitrary $0<\sigma_{2}<\sigma_{1}$, is a feasible solution of problem (\ref{3.10}) such that $\tilde{P}(\tilde{z}) > \tilde{P}(\bar{z})$. Consider  
$$
\hat{z}(t) = \tilde{z}(t)+\nabla \bar{h}^\prime(t) d(\tilde{z}(t)-\bar{z}(t),t) ~\mbox{a.e.}~ t \in [0,T].
$$ 
Using (\ref{3.9}) and (H3), we have that
\begin{eqnarray*}
\| \hat{z}(t)-\bar{z}(t) \| & = & \| (\tilde{z}(t)-\bar{z}(t))+\nabla \bar{h}^\prime(t) d(\tilde{z}(t)-\bar{z}(t),t)\| \\ 
&\leq & \| \tilde{z}(t)-\bar{z}(t) \| + \| \nabla \bar{h}^\prime(t) \| \cdot \| d(\tilde{z}(t)-\bar{z}(t),t) \| < \sigma_{1}+K_{0}\delta_{1} < \epsilon.
\end{eqnarray*} 
As $\tilde{z}-\bar{z} \in \sigma_{1}B$, using the definition of $\mu$ we have that for a.e. $t \in [0,T]$,
$$
\mu(\tilde{z}(t)-\bar{z}(t),d(\tilde{z}(t)-\bar{z}(t),t),t) = 0 \Rightarrow h(\tilde{z}(t)+\nabla \bar{h}^\prime(t) d(\tilde{z}(t)-\bar{z}(t),t),t)=0,
$$ 
that is, $h(\hat{z}(t),t)=0$ a.e. $t \in [0,T]$. But, 
$$
P(\hat{z}) = \tilde{P}(\tilde{z}) > \tilde{P}(\bar{z}) = P(\bar{z}),
$$ 
contradicting the fact that $\bar{z}$ is a local optimal solution of (\ref{3.2}).

\noindent\textbf{STEP 3:} Applying  Proposition \ref{IL-SR}, we have that for a.e. $t \in [0,T]$,
\begin{eqnarray*}
0 & = & \nabla\varphi(\bar{z}(t),t) \\ 
& =& \lbrace I_{n}+\nabla \bar{h}^\prime(t) \nabla d(0,t)\rbrace^\prime \nabla \phi(\bar{z}(t) + \nabla \bar{h}^\prime(t) d(0,t),t) \\
& = & \nabla \phi(\bar{z}(t),t) + \nabla d^\prime(0,t) \nabla \bar{h}(t) \nabla\phi(\bar{z}(t),t) \\
& = & \nabla \phi(\bar{z}(t),t) + \nabla h^\prime(\bar{z}(t),t) \lbrace -[ \nabla \bar{h}(t) \nabla \bar{h}^\prime(t) ]^{-1} \nabla \bar{h}(t) \nabla\phi(\bar{z}(t),t)\rbrace \\
& = & \nabla \phi(\bar{z}(t),t) + \nabla h^\prime(\bar{z}(t),t)u(t) \\
& = & \nabla \phi(\bar{z}(t),t) + \sum_{i=1}^{p} u_{i}(t) \nabla h_{i}(\bar{z}(t),t)
\end{eqnarray*}
where 
$$
u(t)=-[ \nabla \bar{h}(t) \nabla \bar{h}^\prime(t) ]^{-1} \nabla \bar{h}(t) \nabla\phi(\bar{z}(t),t)~\mbox{a.e.}~ t \in [0,T].
$$ 
Observe that $u\in L_{\infty}([0,T];\R^{p})$ is unique and that 
$$
\| u(t) \| \leq MK_{0}K_{\phi}~~\mbox{a.e.}~ t \in [0,T],
$$
by hypotheses (H1) and (H3) and by (\ref{3.5}). 

Now, being $\phi(\cdot,t)$ and $h(\cdot,t)$ twice continuously differentiable on $\bar{z}+\epsilon \bar{B}$ throughout $[0,T]$, it follows directly from its definition that $\mu(\xi,d(\xi,t),t)$ is twice continuously differentiable on $\sigma_{1} B$ for a.e. $t \in [0,T]$. Consequently, from Corollary \ref{TFIU}, $d$ is continuously differentiable in a neighborhood of $\xi=0$ (by simplicity, consider this neighborhood as being $\sigma_{1}B$). By Proposition \ref{IL-SR} we have that 
$$
\int_{0}^{T}\gamma^\prime(t)\nabla^{2}\varphi(z(t),t)\gamma(t)~dt\leq 0~~\forall~\gamma\in L_{\infty}([0,T];\R^{n}).
$$ 
Let us calculate $\nabla^2\varphi$. We have that
\begin{eqnarray*}
\nabla\varphi(z(t),t) & = & [I_{n}+\nabla \bar{h}^\prime(t) \nabla d(z(t)-\bar{z}(t),t)]^\prime \nabla\phi(z(t) + \nabla \bar{h}^\prime(t) d(z(t)-\bar{z}(t),t)) \\
& = & \nabla\phi(z(t)+\nabla \bar{h}^\prime(t) d(z(t)-\bar{z}(t),t)) \\
&& +\nabla d^\prime(z(t)-\bar{z}(t),t)\nabla \bar{h}(t) \nabla \phi(z(t)+\nabla \bar{h}^\prime(t) d(z(t)-\bar{z}(t),t)),
\end{eqnarray*}
where 
$$
\nabla d^\prime(z(t)-\bar{z}(t),t) \nabla \bar{h}(t) = \sum_{i=1}^{p} \nabla d_{i}(z(t)-\bar{z}(t),t) \nabla \bar{h}_{i}^\prime(t)~~\mbox{a.e.}~ t \in [0,T].
$$ 
Putting $z=\bar{z}$ and using (\ref{3.8}) results
$$
-\nabla \bar{h}^\prime(t)[\nabla \bar{h}(t)\nabla \bar{h}^\prime(t)]^{-1} \nabla \bar{h}(t) = \nabla d^\prime(0,t) \nabla \bar{h}(t) = \sum_{i=1}^{p}\nabla d_{i}(0,t) \nabla \bar{h}_{i}^\prime(t) ~~\mbox{a.e.}~ t \in [0,T].
$$
From the expression to $\nabla \varphi$ we obtain
\begin{eqnarray*}
&& \hspace{-0.5cm} \nabla^{2}\varphi(z(t),t) = \left\{ I_{n}+\nabla d^\prime(z(t)-\bar{z}(t),t) \nabla \bar{h}(t) \right\} \nabla^{2} \phi(z(t)+\nabla \bar{h}^\prime(t) d(z(t)-\bar{z}(t),t),t) \\
&& + \sum_{i=1}^{p} \nabla^{2}d_{i}(z(t)-\bar{z}(t),t)\nabla\bar{h}^\prime_{i}\nabla\phi(z(t)+\nabla \bar{h}^\prime(t) d(z(t)-\bar{z}(t),t),t) \\
&& + \sum_{i=1}^{p} \nabla d_{i}(z(t)-\bar{z}(t),t)\nabla\bar{h}^\prime_{i}(t) \nabla^{2}\phi(z(t)+\nabla \bar{h}^\prime(t) d(z(t)-\bar{z}(t),t),t) \\
&& + \sum_{i=1}^{p} \nabla d_{i}(z(t)-\bar{z}(t),t)\nabla\bar{h}^\prime_{i}(t) \nabla d^\prime(z(t)-\bar{z}(t),t) \nabla \bar{h}(t) \nabla^{2}\phi(z(t)+\nabla \bar{h}^\prime(t) d(z(t)-\bar{z}(t),t),t).
\end{eqnarray*}
Particularly for $z=\bar{z}$, taking $\gamma\in N$, it follows for a.e. $t \in [0,T]$ that
\begin{eqnarray*}
&& \hspace{-0.5cm} \gamma^\prime(t) \nabla^{2} \varphi(\bar{z}(t),t) \gamma(t) \\
&& \quad  = \gamma^\prime(t) \nabla^{2} \bar{\phi}(t) \gamma(t) - \gamma^\prime(t) \nabla\bar{h}^\prime(t) [\nabla \bar{h}(t) \nabla \bar{h}^\prime(t)]^{-1} \nabla \bar{h}(t) \nabla^{2} \bar{\phi}(t) \\
&& \qquad +\gamma^\prime(t) \left[ \sum_{i=1}^{p} \nabla^{2} d_{i}(0,t) \nabla \bar{h}^\prime_{i}(t) \right] \nabla \bar{\phi}(t) \gamma(t) \\
&& \qquad -\gamma^\prime(t) \nabla \bar{h}^\prime(t) [\nabla \bar{h}(t) \nabla \bar{h}^\prime(t)]^{-1} \nabla \bar{h}(t) \left[I_{n}+\nabla d^\prime(0,t)\nabla \bar{h}(t) \right] \nabla^{2} \bar{\phi}(t)\gamma(t),
\end{eqnarray*}
and integrating from $0$ to $T$ one has
\begin{eqnarray} \label{3.11} 
\int_{0}^{T}\gamma^\prime(t) \left\{ \nabla^{2} \bar{\phi}(t) + \left[\sum_{i=1}^{p} \nabla^{2} d_{i}(0,t) \nabla \bar{h}^\prime_{i}(t) \right] \nabla\bar{\phi}(t) \right \} \gamma(t) dt \leq 0.
\end{eqnarray}
On the other hand, once 
$$
\mu_{i}(\xi,d(\xi,t),t) = h_{i}(\bar{z}(t) + \xi + \nabla \bar{h}^\prime(t) d(\xi,t),t)~~\mbox{a.e.}~ t \in [0,T],~i\in I,
$$ 
we have
\begin{eqnarray*}
\nabla_\xi \mu_{i}(\xi,d(\xi,t),t) & = & [I_{n} + \nabla \bar{h}^\prime(t) \nabla d(\xi,t)]^\prime \nabla h_{i}(\bar{z}(t) + \xi + \nabla \bar{h}^\prime(t) d(\xi,t),t) \\
& = & \nabla h_{i}(\bar{z}(t) + \xi + \nabla \bar{h}^\prime(t) d(\xi,t),t) \\
&& + \nabla d^\prime(\xi,t) \nabla \bar{h}(t) \nabla h_{i}(\bar{z}(t) + \xi + \nabla \bar{h}^\prime(t) d(\xi,t),t),~i\in I.
\end{eqnarray*}
By (\ref{3.7}), for a.e. $t \in [0,T]$ and for each $i \in I$, we get
\begin{eqnarray*}
0 & = & \nabla^{2} \mu_{i}(\xi,d(\xi,t),t) \\
& = & [I_{n}+\nabla \bar{h}^\prime(t) \nabla d(\xi,t)]^\prime \nabla^{2} h_{i}(\bar{z}(t)+\xi+\nabla \bar{h}^\prime(t) d(\xi,t),t) \\ 
&& +\left[\sum_{j=1}^{p} \nabla^{2} d_{j}(\xi,t) \nabla \bar{h}^\prime_{j}(t) \right] \nabla h_{i}(\bar{z}(t)+\xi+\nabla \bar{h}^\prime(t) d(\xi,t),t) \\
&& +\left[\nabla d^\prime(\xi,t) \nabla \bar{h}(t) \right] \left[I_n + \nabla d^\prime(\xi,t) \nabla \bar{h}(t) \right] \nabla^{2} h_{i}(\bar{z}(t)+\xi+\nabla \bar{h}^\prime(t) d(\xi,t),t).
\end{eqnarray*}
We now put $\xi=0$ in the last expression, multiply it by $u_{i}(t)$ for a.e. $t \in [0,T]$, sum up from $1$ to $p$ and take the inner product with $\gamma \in N$, which results for a.e. $t \in [0,T]$, in
\begin{eqnarray*}
0 & = & \gamma^\prime(t) \left[ \sum_{i=1}^{p} u_{i}(t) \nabla^{2} \mu_{i}(0,0,t) \right] \gamma(t) \\
& = & \gamma^\prime(t) \left[ \sum_{i=1}^{p} u_{i}(t) \nabla^{2} \bar{h}_{i}(t) \right] \gamma(t) \\ 
& &-\gamma^\prime(t) \nabla \bar{h}^\prime(t) [\nabla \bar{h}(t) \nabla \bar{h}^\prime(t)]^{-1} \nabla \bar{h}(t) \left[ \sum_{i=1}^{p} u_{i}(t)\nabla^{2} \bar{h}_{i}(t) \right] \gamma(t) \\ 
&& + \gamma^\prime(t) \left[ \sum_{j=1}^{p} \nabla^{2} d_{j}(0,t) \nabla \bar{h}^\prime_{j}(t) \right] \left[ \sum_{i=1}^{p} u_{i}(t) \nabla \bar{h}_{i}(t)\right] \gamma(t )\\ 
&& - \gamma^\prime(t) \nabla \bar{h}^\prime(t)[\nabla \bar{h}(t) \nabla \bar{h}^\prime(t)]^{-1} \nabla \bar{h}(t) \left[ I_n + \nabla d^\prime(0,t)\nabla \bar{h}(t) \right] \left[ \sum_{i=1}^{p} u_{i}(t) \nabla^{2} \bar{h}_{i}(t) \right] \gamma(t).
\end{eqnarray*}
Integrating from $0$ to $T$ gives 
\begin{eqnarray}
&& \int_{0}^{T}\gamma^\prime(t) \left\{ \sum_{i=1}^{p} u_{i}(t) \nabla^{2} \bar{h}_{i}(t) + \left[ \sum_{j=1}^{p} \nabla^{2} d_{j}(0,t)\nabla \bar{h}^\prime_{j}(t) \right] \left[ \sum_{i=1} u_{i}(t) \nabla \bar{h}_{i}(t) \right] \right\} \gamma(t)~dt \nonumber \\  
&& \quad = 0. \label{3.12}
\end{eqnarray}
Adding (\ref{3.11}) and (\ref{3.12}) and using (\ref{3.3}), results in (\ref{3.4}).
\end{proof}

\section{KKT conditions for problems with equality and inequality constraints}

The general case is now tackled. Consider the problem (\ref{1.1}) with equality and inequality constraints. Given $\epsilon>0$ and $\bar{z} \in \Omega$, we assume that, in addition to (H1), the following hypotheses are valid:
\begin{itemize}
\item[(H5)] $h(z,\cdot)$ and $g(z,\cdot)$ are measurable for each $z$, $h(\cdot,t)$ and $g(\cdot,t)$ are continuously differentiable on $\bar{z}(t)+\epsilon \bar{B}~\mbox{a.e.}~ t \in [0,T]$.
\item[(H6)] There exists an increasing function $\bar{\theta} : (0,\infty) \rightarrow (0,\infty)$, $\bar{\theta}(s) \downarrow 0$ when $s \downarrow 0$, such that for all $\tilde{z}, z \in \bar{z}(t) + \epsilon \bar{B}$ and a.e. $t \in [0,T]$, 
$$
\| \nabla [h,g](\tilde{z},t) - \nabla [h,g](z,t)\| \leq \bar{\theta}(\|\tilde{z}-z\|).
$$ 
There exists $K_{1}>0$ such that for a.e. $t \in [0,T]$, 
$$
\| \nabla [h,g](\bar{z},t) \| \leq K_{1}.
$$
\item[(H7)] There exists $K>0$ such that  
$$
\det \{ \Upsilon(t) \Upsilon^{\prime}(t) \} \geq K,
$$ 
where 
$$
\Upsilon(t) = \left[ \begin{array}{cc} \nabla \bar{h}(t) & 0 \\ \nabla \bar{g}(t) & \mathrm{diag}\{-2\bar{w}_{j}(t)\}_{j\in J} \end{array}\right],
$$
and $\bar{w}_{j} = \sqrt{\bar{g}_{j}(t)}$ a.e. $t \in [0,T]$, $j\in J$.
\end{itemize}

\begin{remark} Assumption (H7), among other things, tell us that the vector set $\{ \nabla\bar{h}_{i}(t) \mid i\in I \} \cup \{ \nabla \bar{g}_{j}(t) \mid j\in I_{a}(t) \}$ is linearly independent for a.e. $t \in [0,T]$. 
\end{remark}

Next we present two examples referring the assumption (H7).

\begin{example} 
Consider the problem
\[
\begin{array}{ll} 
\mbox{maximize} & \displaystyle\int_{0}^{1}[-z_{1}^{2}(t)-z_{2}^{2}(t)]dt \\
\mbox{subject to} & h(z(t),t)=z_{1}(t)-z_{2}(t)=0 ~~ \mbox{a.e.} ~t \in [0,1],\\
& g_{1}(z(t),t)=z_{1}(t)+\frac{1}{2}z_{2}^{2}(t) \geq 0 ~~ \mbox{a.e.} ~ t \in [0,1],\\
& g_{2}(z(t),t)=z_{1}(t)z_{2}(t)+1 \geq 0 ~~ \mbox{a.e.} ~ t \in [0,1], 
\end{array}
\]
where $z=(z_{1},z_{2}) \in L_{2}^{\infty}[0,T]$ and $h,g_{1},g_{2}:\R^{2}\times [0,1] \rightarrow \R$. It is easy to see that $\bar{z}=(0,0)$ is an optimal solution and that $I_{a}(t)=\{ 1\}$ a.e. $t \in [0,T]$. Thus, the matrix in assumption (H7) is given by 
$$
\Upsilon(t)=\left[\begin{array}{ccc} 1 & -1 & 0 \\ 1 & 0 & 0 \\ 0 & 0 & -2\end{array}\right]~~\mbox{a.e.}~ t \in [0,T],
$$ 
which has full rank for a.e. $t \in [0,1]$. Note that $\{\nabla\bar{h}(t),\nabla\bar{g}_{1}(t)\}$ is linearly independent for a.e. $t \in [0,T]$.
\end{example}

\begin{example} 
Consider $h,g_{1},g_{2}:\R^{3}\times [0,1]\rightarrow\R$ and 
\[
\begin{array}{ll} 
\mbox{maximize} &\displaystyle\int_{0}^{1}[-(z_{1}(t)-1)^{2}-(z_{2}(t)-1)^{2}]dt \\
\mbox{subject to} & h(z(t),t)= -z_{1}^{2}(t)-z_{2}^{2}(t)+z_{3}(t)+1=0 ~~ \mbox{a.e.} ~t \in [0,1],\\
& g_{1}(z(t),t)= -2z_{1}z_{2}+4z_{2}+z_{3}-3\geq 0 ~~ \mbox{a.e.} ~t \in [0,1],\\
& g_{2}(z(t),t)= -z_{1}(t)+\frac{1}{2}z_{3}(t)+\frac{1}{2} \geq 0 ~~ \mbox{a.e.} ~t \in [0,1].
\end{array}
\]
The feasible point $\bar{z}=(1,1,1)$ is an optimal solution for this problem. Note that, for a.e. $t \in [0,1]$, $I_{a}(t) = \{ 1,2 \}$ and 
$$
\Upsilon(t)=\left[\begin{array}{ccccc}-2 & -2 & 1 & 0 & 0 \\ -2 & 2 & 1 & 0 & 0 \\ -1 & 0 & \frac{1}{2} & 0 & 0 \end{array}\right]~~\mbox{a.e.}~t \in [0,1].
$$
Provided $\mathrm{rank}(\Upsilon(t)) = 2$ a.e. $t \in [0,1]$, (H7) is not valid, though $\bar{z}$ is an optimal solution.
\end{example}

The lemma below will be used in the proof of the main result of this section.

\begin{lemma}\label{lema 4.1} 
Let $k\in J$ be arbitrary and $D \subset [0,T]$ be a subset of positive measure such that $k \in I_{a}(t)$ for all $t \in D$. If assumption (H7) holds true, then there exists $\gamma \in L_{\infty}([0,T];\R^{n})$ such that, for all $t\in D$, one has
\begin{equation}\label{4.1}
\nabla \bar{h}^\prime_{i}(t) \gamma(t) = 0,~i\in I,~~\nabla \bar{g}^\prime_{j}(t) \gamma(t) = 0,~j\in I_{a}(t) \setminus \{ k \},~~\nabla\bar{g}^\prime_{k}(t) \gamma(t) > 0.
\end{equation}
\begin{proof} 
If the components of $g$ are permuted in such a way that the active constraints come first, then $\Upsilon(t)$ in (H7) can be rewritten as  
$$
\Upsilon(t) = \left[\begin{array}{cc}
\nabla\bar{h}(t) & 0  \\ \nabla\bar{g}^{I_{a}(t)}(t) & 0  \\ 
\nabla\bar{g}^{I_{c}(t)}(t) &  \Lambda(t)
\end{array}\right]~~\mbox{a.e.}~ t \in [0,T],
$$ 
where $\Lambda(t) = \left[ \begin{array}{cc} 0 & \Lambda_{1}(t) \end{array} \right]$, $0 \in \R^{q_{c}(t) \times q_{a}(t)}$ and $\Lambda_{1}(t) = \mathrm{diag}\{-2\bar{w}_{j}(t)\}$, $j\in I_{c}(t)$. By (H7), the matrix $\Upsilon(t)$ has full rank for almost every $t\in [0,T]$. As $k\in I_{a}(t)$, for all $t\in D$, let $b \in L_{\infty}([0,T];\R^{n+m})$ be given as
$$
b_{j}(t) = \left\{ \begin{array}{ll}
0, & \mbox{if} ~ j \in I \cup J \setminus \{ k \}, ~ t \in D,\\ 
1,& \mbox{if} ~ j=k, ~ t \in D, \\
0 & \mbox{if} ~ t \in [0,T] \setminus D.
\end{array} \right.
$$
Then the system 
\begin{equation} \label{4.2} 
\Upsilon(t) \gamma(t) = b(t),
\end{equation}
is consistent for almost every $t\in [0,T]$, with solution $\gamma = (\gamma,\gamma_{1}) \in L_{\infty}([0,T];\R^{n+m})$ given by 
$$
\gamma(t) = \Upsilon'(t)[\Upsilon(t)\Upsilon'(t)]^{-1}b(t)~\mbox{a.e.} ~ t \in [0,T].
$$
Particularly, (\ref{4.2}) holds for all $t\in D$, so that, for all $t\in D$, there exists $\gamma \in L_{n}^{\infty}[0,T]$ such that
$$
\nabla \bar{h}^\prime_{i}(t)\gamma(t) = 0, ~ i \in I, ~~ \nabla \bar{g}^\prime_{j}(t) \gamma(t) = 0, ~ j \in I_{a}(t) \setminus \{ k \} ~~\mbox{and}~~\nabla \bar{g}^\prime_{k}(t) \gamma(t) = 1 > 0.
$$ 
\end{proof}
\end{lemma}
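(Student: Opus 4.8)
The plan is to package all the scalar conditions in (\ref{4.1}) into a single linear system driven by the full-rank matrix $\Upsilon(t)$, and then to write down an explicit solution that lies in $L_\infty$. First I would permute the inequality constraints so that the active indices $I_a(t)$ come before the inactive ones $I_c(t)$. The decisive observation is that for an active index $j\in I_a(t)$ we have $\bar{g}_j(t)=0$, hence $\bar{w}_j(t)=\sqrt{\bar{g}_j(t)}=0$ and the associated diagonal entry $-2\bar{w}_j(t)$ vanishes. Thus, after permutation, the rows of $\Upsilon(t)$ coming from the equalities and from the active inequalities carry only zeros in the slack columns, whereas the inactive rows retain the nonsingular diagonal block $\Lambda_1(t)=\mathrm{diag}\{-2\bar{w}_j(t)\}_{j\in I_c(t)}$.

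Next I would encode the target relations as the right-hand side of the system $\Upsilon(t)\tilde{\gamma}(t)=b(t)$, where $\tilde{\gamma}=(\gamma,\gamma_1)\in\R^{n+m}$, by choosing $b(t)$ to be the vector equal to $1$ in the coordinate indexed by $k$ and $0$ in every other coordinate when $t\in D$, and identically $0$ when $t\notin D$. Because the rows indexed by $I$ and by $I_a(t)$ act only on the $\gamma$-block, the first $p+q_a(t)$ equations of this system read precisely $\nabla\bar{h}_i'(t)\gamma(t)=0$ for $i\in I$, $\nabla\bar{g}_j'(t)\gamma(t)=0$ for $j\in I_a(t)\setminus\{k\}$, and $\nabla\bar{g}_k'(t)\gamma(t)=1>0$; these are exactly the conditions in (\ref{4.1}). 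The inactive rows, carrying $\Lambda_1(t)$, serve only to fix $\gamma_1(t)$ and place no extra constraint on $\gamma(t)$.

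It then remains to solve and to bound the solution. Since (H7) gives $\det\{\Upsilon(t)\Upsilon'(t)\}\geq K>0$ for a.e. $t$, the matrix $\Upsilon(t)$ has full row rank, so its range is all of $\R^{p+m}$ and the system is consistent a.e.; the minimum-norm solution is $\tilde{\gamma}(t)=\Upsilon'(t)[\Upsilon(t)\Upsilon'(t)]^{-1}b(t)$, and its $\R^n$-component is the sought $\gamma$. To place $\gamma$ in $L_\infty([0,T];\R^n)$ I would note that $\Upsilon(t)$ is essentially bounded by (H6) together with the boundedness of the $\bar{w}_j$ (as $\bar{z}\in L_\infty$ and $g(\cdot,t)$ is continuous, $\bar{g}_j$ and hence $\sqrt{\bar{g}_j}$ are essentially bounded); combining this with (H7) and Proposition \ref{Inv Ltda} yields a uniform bound on $\|[\Upsilon(t)\Upsilon'(t)]^{-1}\|$. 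Since $b$ is measurable and bounded by $1$, the product formula makes $t\mapsto\tilde{\gamma}(t)$ measurable and essentially bounded, so $\gamma\in L_\infty$; after discarding a null set (where (H7) may fail) the conclusions hold for every $t\in D$.

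I expect the main difficulty to be conceptual rather than computational: the right idea is that the vanishing of the slack entries on the active block is exactly what decouples the equality and active-inequality conditions onto $\gamma$ alone, while the nonsingular slack block on the inactive rows furnishes the extra degrees of freedom $\gamma_1$ needed to keep the augmented system solvable. Once this structure is seen, consistency is immediate from the full-rank hypothesis and the $L_\infty$ bound is a routine application of Proposition \ref{Inv Ltda}.
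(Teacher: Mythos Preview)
Your proposal is correct and follows essentially the same route as the paper: permute the inequality rows so that the active block sits above the inactive one, observe that the slack entries $-2\bar{w}_j(t)$ vanish on the active rows, set up the right-hand side $b(t)$ with a $1$ in the $k$-th slot on $D$, and read off $\gamma$ from the minimum-norm solution $\tilde{\gamma}(t)=\Upsilon'(t)[\Upsilon(t)\Upsilon'(t)]^{-1}b(t)$. Your write-up is in fact a bit more careful than the paper's, since you spell out via (H6), the boundedness of $\bar w$, and Proposition~\ref{Inv Ltda} why the resulting $\gamma$ actually lies in $L_\infty([0,T];\R^n)$, a point the paper simply asserts.
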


Next, the Karush-Kuhn-Tucker type optimality conditions are obtained for the general case.

\begin{theorem}\label{TKKT-IL-PTC} 
Let $\bar{z} \in \Omega$ be a local optimal solution for the problem (\ref{1.1}). Suppose that (H1), (H5)-(H7) do hold and that $g(\bar{z}(\cdot),\cdot)$ is bounded in $[0,T]$. Then there exists $(u,v) \in L_{\infty}([0,T];\R^{p}\times\R^{m})$ such that for a.e $t \in [0,T]$ one has
\begin{eqnarray}
&& \nabla \bar{\phi}(t) + \sum_{i=1}^{p} u_{i}(t) \nabla \bar{h}_{i}(t) + \sum_{j=1}^{m} v_{j}(t) \nabla \bar{g}_{j}(t) = 0, \label{4.3} \\
&& v(t)\geq 0, \\
&& v_{j}(t)\bar{g}_{j}(t) = 0, ~ j \in J. \label{4.4}
\end{eqnarray}
Moreover, 
\begin{equation} \label{4.5} 
\int_{0}^{T} \gamma^\prime(t) \lbrace \nabla^{2} \bar{\phi}(t) + \sum_{i=1}^{p} u_{i}(t) \nabla^{2} \bar{h}_{i}(t) + \sum_{j=1}^{m} v_{j}(t) \nabla^{2} \bar{g}_{j}(t) \rbrace \gamma(t) dt \leq 0
\end{equation}
for all $\gamma \in \bar{N}$, where $\bar{N}$ is given by
$$
\bar{N} = \{ \gamma \in L_{\infty}([0,T];\R^{n}) ~\mid~ \nabla \bar{h}(t)\gamma(t)=0,~~\nabla \bar{g}'_{j}(t)\gamma(t)=0,~j\in I_{a}(t), ~\mbox{a.e.} ~ t \in [0,T] \}.
$$
\end{theorem}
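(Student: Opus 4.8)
The plan is to reduce the inequality-constrained problem (\ref{1.1}) to a purely equality-constrained problem in an enlarged variable, so that Theorem~\ref{TKKT-EC} applies directly. First I would introduce slack variables $w=(w_{1},\dots,w_{m})$ and consider the augmented problem of maximizing $\int_{0}^{T}\phi(z(t),t)\,dt$ over $(z,w)\in L_{\infty}([0,T];\R^{n}\times\R^{m})$ subject to the single equality constraint $\tilde{h}(z,w,t)=0$, where $\tilde{h}=(h,\,g-w\circ w)$ and $w\circ w$ denotes the componentwise square. The natural base point is $(\bar{z},\bar{w})$ with $\bar{w}_{j}=\sqrt{\bar{g}_{j}}$; note that $\bar{w}\in L_{\infty}$ precisely because $g(\bar{z}(\cdot),\cdot)$ is assumed bounded. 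Since feasibility of $(z,w)$ for the augmented problem is equivalent to feasibility of $z$ for (\ref{1.1}) (as $g=w\circ w\geq 0$ automatically), and the objective does not involve $w$, the local optimality of $\bar{z}$ transfers to $(\bar{z},\bar{w})$.

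Next I would check that the augmented problem satisfies (H1)--(H4). The decisive point is that the Jacobian of $\tilde{h}$ in $(z,w)$ at the base point is exactly the matrix $\Upsilon(t)$ of (H7), so that the full-rank hypothesis (H4) for the augmented problem is literally (H7); the Lipschitz and boundedness requirements of (H3) follow from (H6) together with the boundedness of $\bar{w}$, while (H1)--(H2) are inherited from (H1) and (H5). Applying Theorem~\ref{TKKT-EC} then yields multipliers $(u,v)\in L_{\infty}([0,T];\R^{p+m})$, a block $u$ for the $h$-rows and a block $v$ for the $g-w\circ w$ rows. Splitting the first-order condition into its $z$- and $w$-components gives (\ref{4.3}) from the $z$-part and the relations $-2v_{j}\bar{w}_{j}=0$ from the $w$-part; since $\bar{w}_{j}=\sqrt{\bar{g}_{j}}$, these are the complementary slackness conditions (\ref{4.4}). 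For the second-order statement, the augmented Lagrangian Hessian is block diagonal with blocks $\nabla^{2}\bar{\phi}+\sum u_{i}\nabla^{2}\bar{h}_{i}+\sum v_{j}\nabla^{2}\bar{g}_{j}$ and $\mathrm{diag}\{-2v_{j}\}$; restricting the augmented second-order condition to directions $(\gamma,\zeta)$ with $\gamma\in\bar{N}$ and $\zeta_{j}=0$ for $j\in I_{a}(t)$ (these lie in the augmented critical cone, the remaining $\zeta_{j}$ being harmless because $v_{j}=0$ off the active set) kills the slack block and reproduces exactly (\ref{4.5}).

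The one conclusion not obtained for free from the reduction is the sign $v(t)\geq 0$, and I expect this to be the main obstacle, since Theorem~\ref{TKKT-EC} delivers unsigned multipliers. This is where Lemma~\ref{lema 4.1} enters. Suppose, for contradiction, that $v_{k}<0$ on a set $D$ of positive measure for some $k$; by (\ref{4.4}) necessarily $k\in I_{a}(t)$ on $D$. Lemma~\ref{lema 4.1} supplies a direction $\gamma\in L_{\infty}([0,T];\R^{n})$, vanishing off $D$, with $\nabla\bar{h}_{i}'\gamma=0$, $\nabla\bar{g}_{j}'\gamma=0$ for active $j\neq k$, and $\nabla\bar{g}_{k}'\gamma>0$ on $D$. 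Taking the inner product of (\ref{4.3}) with $\gamma$ and using $v_{j}=0$ for inactive $j$ collapses the sums to $\nabla\bar{\phi}'\gamma=-v_{k}\,\nabla\bar{g}_{k}'\gamma>0$ on $D$, whence $\int_{0}^{T}\nabla\bar{\phi}'\gamma\,dt>0$. By Proposition~\ref{Reiland} the objective strictly increases along $\gamma$, so to contradict optimality it remains to turn $\gamma$ into a genuinely \emph{feasible} perturbation.

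That feasibility correction is the technical heart. Along $\gamma$ the equalities and the active inequalities other than $k$ are only first-order stationary, so I would re-invoke the uniform implicit function theorem (Proposition~\ref{TFIU}), exactly as in Steps~1--2 of the proof of Theorem~\ref{TKKT-EC}, to build a measurable, Lipschitz, $L_{\infty}$ arc $z_{\tau}=\bar{z}+\tau\gamma+o(\tau)$ with $h(z_{\tau}(t),t)=0$ and $g_{j}(z_{\tau}(t),t)=0$ for $j\in I_{a}(t)\setminus\{k\}$, while $g_{k}$ increases and the inactive constraints remain positive for small $\tau>0$; the full-rank condition (H7) guarantees that the submatrix obtained by dropping the $k$-th active row is uniformly invertible, which is what Proposition~\ref{Inv Ltda} and Proposition~\ref{TFIU} require. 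Such $z_{\tau}$ is feasible for (\ref{1.1}), and since its tangent at $\tau=0$ is $\gamma$ it satisfies $P(z_{\tau})>P(\bar{z})$ for small $\tau$, contradicting optimality; hence $v\geq 0$, completing the proof. (Alternatively, the sign may be read off directly from the augmented second-order condition by inserting the slack direction $(\gamma,\zeta)=(0,\psi e_{k})$ with $\psi$ supported where $k$ is active, which forces $\int v_{k}\psi^{2}\geq 0$ and hence $v_{k}\geq 0$; this avoids the arc construction but does not use Lemma~\ref{lema 4.1}.)
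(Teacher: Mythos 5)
Your proposal matches the paper's proof essentially step for step: the same slack-variable reduction to the equality-constrained Theorem~\ref{TKKT-EC} with $\Upsilon(t)$ as the Jacobian of the augmented constraint map, the same use of Lemma~\ref{lema 4.1} together with Proposition~\ref{Reiland} to rule out $v_{k}<0$ on a set of positive measure, and the same choice of $\nu$ (zero on $I_{a}(t)$, $\nabla g_{j}'(\bar z(t),t)\gamma(t)/(2\bar w_{j}(t))$ off it) to pass the second-order condition down to $\bar N$. The only divergence is in the sign step: where you insist on constructing a genuinely feasible arc via the uniform implicit function theorem before invoking optimality, the paper stops at Proposition~\ref{Reiland} and declares the contradiction directly from $P(\bar z+\tau\tilde\gamma)>P(\bar z)$, so your feasibility-correction (and your alternative slack-direction argument for $v\geq 0$) are additions to, not departures from, the paper's argument.
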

\begin{proof} 
Let $w : [0,T] \rightarrow\R^{m}$ be a measurable function and consider the auxiliary problem below
\begin{equation}\label{4.6}
\begin{array}{ll} 
\mbox{maximize} & \tilde{P}(z,w) = \displaystyle\int_{0}^{T}\phi(z(t),t)dt \\ 
\mbox{subject to} & h(z(t),t)=0~~\mbox{a.e.}~ t \in [0,T], \\
& g(z(t),t)-w^{2}(t)=0~~\mbox{a.e.}~ t\in [0,T],
\end{array}
\end{equation}
where $$
w^{2}(t) = \left[ \begin{array}{c} w^{2}_{1}(t) \\ w^{2}_{2}(t) \\ \vdots \\ w^{2}_{m}(t) \end{array} \right] ~~ \mbox{a.e.}~ t \in [0,T].
$$
We proceed in several steps.

\noindent\textbf{STEP 1:} If $\bar{z}$ is local optimal solution for the problem (\ref{1.1}), then $(\bar{z},\bar{w})$ is local optimal solution for the problem (\ref{4.6}), where 
$$
\bar{w}_{j}(t)=\sqrt{\bar{g}_{j}(t)}~~\mbox{a.e.}~ t \in [0,T],~ j\in J.
$$ 
Indeed, if $\bar{z}$ is a solution of (\ref{1.1}) in a $\epsilon \in (0,1)$ neighborhood, suppose that for all $0<\delta<\epsilon$, there exists $(\tilde{z},\tilde{w}) \in (\bar{z},\bar{w}) + \delta B$ with $\tilde{P}(\tilde{z},\tilde{w}) >\tilde{P}(\bar{z},\bar{w})$. Noticing that 
$$
h(\tilde{z}(t),t)=0~~\mbox{and}~~g(\tilde{z}(t),t)=\tilde{w}^{2}(t)\geq 0~~\mbox{a.e.}~ t \in [0,T],
$$ 
we see that $\tilde{z}$ is feasible for the problem (\ref{1.1}) and
$$
P(\tilde{z}) = \int_{0}^{T} \phi(\tilde{z}(t),t)dt = \tilde{P}(\tilde{z},\tilde{w}) > \tilde{P}(\bar{z},\bar{w}) = \int_{0}^{T}\phi(z(t),t)dt = P(\bar{z}),
$$ 
contradicting the local optimality of $\bar{z}$ for (\ref{1.1}).

\noindent\textbf{STEP 2:} Define
$$
\Psi(z,w,t) = \left[ \begin{array}{c} h(z,t) \\ g(z,t) - w^{2} \end{array} \right] \in \R^{p+m}.
$$ 
We will verify that the auxiliary problem (\ref{4.6}) satisfies the conditions (H1)-(H4) with $\Psi$ and $(z,w)$ playing the role of $h$ and $z$, respectively. The assumptions (H1) and (H2) are immediate. Considering $(\tilde{z},\tilde{w}),~(z,w)\in (\bar{z}(t),\bar{w}(t))\in\epsilon\bar{B}$ we have for a.e. $t \in [0,T]$ that
\begin{eqnarray*}
&& \hspace{-0.5cm} \| \nabla \Psi(\tilde{z},\tilde{w},t) - \nabla \Psi(z,w,t) \| \\
&& \quad =  \| [\nabla_{z} \Psi(\tilde{z},\tilde{w},t) - \nabla_{z} \Psi(z,w,t) ~~ \nabla_{w} \Psi(\tilde{z},\tilde{w},t) - \nabla_{w} \Psi(z,w,t)] \| \\
&& \quad \leq \| \nabla [h,g](\tilde{z},t) - \nabla[h,g](z,t) \| + \| (-2)\mathrm{diag} \{ \tilde{w}_{i}(t) - w_{i}(t) \} \| \\
&& \quad = \| \nabla [h,g](\tilde{z},t) - \nabla[h,g](z,t) \| + 2\| (\tilde{w}(t)-w(t)) \| \\ 
&& \quad \leq \bar{ \theta}(\|\tilde{z}-z\|) + 2 \| \tilde{w}-w \| = \bar{\theta}(\|\tilde{z}-z\|)+2\|\tilde{w}-w\| \\ 
&& \quad \leq \bar{\theta}(\|(\tilde{z}-z,\tilde{w}-w)\|) + 2\| (\tilde{z}-z,\tilde{w}-w) \| = \tilde{\theta}(\|(\tilde{z},z)-(\tilde{w},w)\|),
\end{eqnarray*}
where $\tilde{\theta} : (0,\infty) \rightarrow (0,\infty)$ is given by $\tilde{\theta}(s) = \bar{\theta}(s)+2s$. $\tilde{\theta}$ is an increasing function and when $s\downarrow 0$, $\tilde{\theta}(s)=\bar{\theta}(s)+2s\downarrow 0$. Also,
\begin{eqnarray*}
\| \nabla \Psi(\bar{z}(t),\bar{w}(t),t) \| & \leq & \| \nabla_{z} \Psi(\bar{z}(t),\bar{w}(t),t) \| + \| \nabla_{w} \Psi(\bar{z}(t),\bar{w}(t),t) \| \\
& = & \| \nabla[h,g](\bar{z}(t),t) \| + 2\| \mathrm{diag}\{\bar{w}_{i}(t)\} \| \leq K_{0},
\end{eqnarray*} 
where $K_{0}=K_{1}+\|\bar{w}\|^{\infty}_{m}$ comes from (H6) and from the assumption that $\bar{w}(t)=g(\bar{z}(t),t)$ is uniformly bounded in $[0,T]$, verifying (H3). Finally, as
$$
\nabla \Psi(\bar{z}(t),t) = \Upsilon(t)~~\mbox{a.e.}~ t \in [0,T],
$$
the assumption (H7) implies (H4).

\noindent\textbf{STEP 3:} By Theorem \ref{TKKT-EC}, there exists $(u,v) \in L_{\infty}([0,T];\R^{p}\times\R^{m})$ such that 
$$
\left[ \begin{array}{c} \nabla \bar{\phi}(t) \\ 0 \end{array} \right] + 
\nabla \Psi^\prime(\bar{z}(t),t) \left[ \begin{array}{c} u(t) \\ v(t) \end{array} \right] =0 ~~ \mbox{a.e.}~ t \in [0,T],
$$ 
which implies in
$$
\nabla \bar{\phi}(t) + \sum_{i=1}^{p} u_{i}(t) \nabla \bar{h}_{i}(t) + \sum_{j=1}^{m} v_{j}(t) \nabla \bar{g}_{j}(t) = 0~~\mbox{a.e.}~ t \in [0,T]
$$
and
$$
\bar{w}_{j}(t)v_{j}(t) = 0 ~~ \mbox{a.e.}~ t \in [0,T] \Rightarrow \bar{g}_{j}(t) v_{j}(t) = 0 ~~ \mbox{a.e.}~ t \in [0,T], ~ j \in J,
$$ 
resulting in (\ref{4.3}) and (\ref{4.4}).  

We will show now that $v_{j}(t) \geq 0$ for a.e. $t \in [0,T]$, $j\in I$. Suppose that for any $k \in J$, there exists a positive measure subset $D \subset [0,T]$ such that $v_{k}(t) < 0$, for all $t\in D$. Note that, by (\ref{4.4}), $k \in I_{a}(t)$ for all $t \in D$. By Lemma \ref{lema 4.1}, there exists $\gamma \in L_{\infty}([0,T];\R^{n})$ such that (\ref{4.1}) holds. Defining $\tilde{\gamma} \in L_{\infty}([0,T];\R^{n})$ as
$$
\tilde{\gamma}(t) = \Biggl\{ \begin{array}{l} \gamma(t), ~~ \mbox{if} ~ t\in D \\ 0, ~~ \mbox{otherwise}, \end{array}
$$
and using (\ref{4.1}) and (\ref{4.3}), one has
$$
\nabla \bar{\phi}^\prime(t) \tilde{\gamma}(t) + \sum_{i=1}^{p} u_{i}(t) \nabla\bar{h}^\prime_{i}(t) \tilde{\gamma}(t) + \sum_{j=1}^{m} v_{j}(t) \nabla\bar{g}^\prime_{j}(t) \tilde{\gamma}(t) = 0
$$
$$
\Rightarrow \nabla \bar{\phi}^\prime(t) \tilde{\gamma}(t) = \Biggl\{ \begin{array}{l} -v_{k}(t) \nabla \bar{g}^\prime_{k}(t) \gamma(t)>0, ~~ \mbox{if} ~ t \in D, \\ 0, ~~ \mbox{otherwise}. \end{array}
$$
Then 
$$
\int_{0}^{T} \nabla \bar{\phi}^\prime(t) \tilde{\gamma}(t)dt = \int_{D}^{T} \nabla \bar{\phi}^\prime(t) \tilde{\gamma}(t)dt + \int_{[0,T] \setminus D}\nabla \bar{\phi}^\prime(t) \tilde{\gamma}(t)dt > 0
$$ 
and by Proposition \ref{Reiland} we have that there exists $\sigma>0$ such that $P(\bar{z}+\tau\tilde{\gamma}) > P(\bar{z})$ for $0 < \tau \leq \sigma$, contradicting the fact that $\bar{z}$ is a local optimal solution for the problem (\ref{1.1}). Therefore, $v_{j}(t)\geq 0$ a.e. $t \in [0,T]$, $j\in J$.

\noindent{\bf STEP 4:} We will verify the second order condition (\ref{4.5}). Let us denote, for a.e. $t \in [0,T]$,
$$
L(z(t),w(t),t) = \phi(z(t),t) + \sum_{i=1}^{p} u_{i}(t) h_{i}(z(t),t) + \sum_{j=1}^{m} v_{j}(t) [g_{j}(z(t),t)-w_{j}^{2}(t)],
$$ 
where $u$ and $v$ are the multipliers previously obtained at \textbf{STEP 3}. Then,
\begin{eqnarray*}
\nabla L(z(t),w(t),t) & = & \left[ \begin{array}{c} \nabla_{z}L(z(t),w(t),t) \\ \nabla_{w}L(z(t),w(t),t) \end{array} \right] \\
& = & \left[ \begin{array}{c}
\nabla \phi(z(t),t) + \displaystyle\sum_{i=1}^{p} u_{i}(t) \nabla h_{i}(z(t),t) + \displaystyle\sum_{j=1}^{m} v_{j}(t) \nabla g_{j}(z(t),t) \\ 
-2w_{1}(t)v_{1}(t) \\ \vdots \\ -2w_{m}(t)v_{m}(t)
\end{array} \right],
\end{eqnarray*}
and
$$
\nabla^{2}L(z(t),w(t),t) = \left[ \begin{array}{cc} \nabla_{zz}L(z(t),w(t),t) & 0 \\ 0 & \mathrm{diag}\{-2v_{j}(t)\}_{j=1}^{m} \end{array} \right],
$$ 
where 
$$
\nabla_{zz}L(z(t),w(t),t) = \nabla^{2} \phi(z(t),t) + \sum_{i=1}^{p} u_{i}(t) \nabla^{2} h_{i}(z(t),t) + \sum_{j=1}^{m}v_{j}(t)\nabla^{2} g_{j}(z(t),t).
$$
By Theorem \ref{TKKT-EC}, we have that 
\begin{equation} \label{4.7} 
\int_{0}^{T}(\gamma(t),\nu(t))^\prime\nabla^{2} L(\bar{z}(t),\bar{w}(t),t)(\gamma(t),\nu(t))dt \leq 0,
\end{equation}
for all $(\gamma,\nu) \in L_{\infty}([0,T];\R^{n}\times \R^{m})$ satisfying
\begin{equation} \label{4.8} 
\nabla h(\bar{z}(t),t)\gamma(t) = 0\hspace{0.5cm} \mbox{and} \hspace{0.5cm} \nabla g^\prime_{j}(\bar{z}(t),t)\gamma(t)-2\bar{w}_{j}(t)\nu_{j}(t)=0,~~j\in J,
\end{equation}
for a.e. $t \in [0,T]$. For all $\gamma \in \bar{N}$, consider $\nu$ defined, for a.e. $t \in [0,T]$, as 
$$
\nu_{j}(t) = \Biggl\{ \begin{array}{l} 0,~~\mbox{if}~j\in I_{a}(t), \\ \displaystyle\frac{\nabla g^\prime_{j}(\bar{z}(t),t)\gamma(t)}{2\bar{w}_{j}(t)},~~\mbox{if}~j\in I_{c}(t) \end{array}.
$$ 
Then, note that $(\gamma,\nu)$ satisfies (\ref{4.8}), $\bar{w}_{j}(t) \nu_{j}(t)=0~~\mbox{a.e.}~ t \in [0,T],~\forall~j\in I_{a}(t)$, and by (\ref{4.4}) we have that $v_{j}(t)=0~\mbox{a.e.} ~ t \in [0,T],~\forall~j\in I_{c}(t)$. Thus, 
$$
v_{j}(t)\nu_{j}(t)=0~~\mbox{a.e.} ~ t \in [0,T], ~ j \in J.
$$ 
Replacing $(\gamma,\nu)$ in (\ref{4.7}), we obtain 
$$
\int_{0}^{T} \gamma^\prime(t) \nabla^{2}_{zz} L(\bar{z}(t),\bar{w}(t),t)\gamma(t)dt \leq 0,
$$ 
with arbitrary $\gamma \in \bar{N}$, implying in (\ref{4.5}).  
\end{proof}

\section*{Acknowledgments}

V.A. de Oliveira was partially supported by grants 2013/07375-0 and 2016/03540-4, São Paulo Research Foundation (FAPESP), and by grants 457785/2014-4 and 310955/2015-7, National Council for Scientific and Technological Development (CNPq).

\bibliographystyle{plain}

\end{document}